\providecommand*{\toclevel@titlech}{0}
\edef\toclevel@authorch{\the\numexpr\toclevel@titlech+1}
\numberwithin{equation}{section}
\newcommand{\Int}{\operatorname{Int}}
\newcommand{\two}{\ensuremath{\mathbf{2}}}
\newcommand{\0}{\ensuremath{\mathbf{0}}}
\newcommand{\1}{\ensuremath{\mathbf{1}}}
\newcommand{\2}{\ensuremath{\mathbf{2}}}
\newcommand{\B}{\ensuremath{\mathfrak{B}}}
\newcommand{\A}{\ensuremath{\mathfrak{A}}}
\newcommand{\K}{\ensuremath{\mathbf{K}}}
\newcommand{\X}{\ensuremath{\mathcal{X}}}
\newcommand{\F}{\ensuremath{\mathcal{F}}}
\newcommand{\Eq}{\ensuremath{\mathbf{Eq}}}
\newcommand{\ds}{\oplus} 
\newcommand{\symdiff}{\triangle}
\renewcommand{\c}[1]{\ensuremath{-#1}}
\renewcommand{\phi}{\varphi}
\newcommand{\ult}{\operatorname{Ult}}
\newcommand{\mb}{\ensuremath{\mathcal{M}(B)}}
\newcommand{\cB}{\overline{B}}
\newcommand{\Cm}{\ensuremath{\operatorname{{\mathsf{Cm}}}}}   
\newcommand{\Cf}{\ensuremath{\operatorname{{\mathbb{C}\mathsf{st}}}}}   
\newcommand{\ub}{\ensuremath{\operatorname{{ub}}}}
\newcommand{\df}{:=}
\newcommand{\aright}{``$\Implies$'': \ }
\newcommand{\aleft}{``$\Leftarrow$'': \ }
\newcommand{\rel}{\operatorname{\mathrm{Rel}}}
\newcommand{\klam}[1]{\ensuremath{\langle #1 \rangle}}
\newcommand{\set}[1]{\ensuremath{\{#1\}}}
\newcommand{\z}{\emptyset}
\newcommand{\tand}{\text{ and }}
\newcommand{\tor}{\text{ or }}
\newcommand{\timplies}{\text{ implies }}
\newcommand{\tiff}{if and only if \ }
\newcommand{\Iff}{\Longleftrightarrow}
\newcommand{\Implies}{\Rightarrow}
\newcommand{\conv}[1]{#1\ensuremath{~\breve{}~}}
\newcommand{\poss}[1]{\ensuremath{\klam{ #1 }}}
\newcommand{\wlg}{w.l.o.g.\ }
\newcommand{\card}[1]{\vert #1 \vert}
\newcommand{\da}{\downarrow}
\newcommand{\Kt}{\ensuremath{\mathbf{K^{\sim}}}}
\newcommand{\wmia}{\ensuremath{\mathsf{wMIA}}}
\newcommand{\dda}{\ensuremath{\mathsf{DDA}}}
\newcommand{\kmpa}{\ensuremath{\mathsf{KDDA}}}
\newcommand{\moa}{\ensuremath{\operatorname{\mathsf{MOA}}}}
\newcommand{\dec}{\ensuremath{\operatorname{Dp}}}
\newcommand{\pfend}{\qed} 
\numberwithin{equation}{section}
\title{On the semilattice of modal operators and \\ decompositions of the discriminator}
\author{Ivo D{\"u}ntsch\inst{1}${}^{,}$\inst{2}${}^{,}$\thanks{Ivo D{\"u}ntsch gratefully acknowledges support by the National Science Fund of Bulgaria, contract DN02/15/19.12.2016.} \and Wojciech Dzik\inst{3} \and Ewa Or{\l}owska\inst{4}}
\institute{
Dept of Computer Science, {Brock University} \\
{St Catharines, ON, L2S 3A1, Canada} \and  School of Mathematics and Computer Science, Fujian Normal University, \\ Fuzhou, Fujian, China
\\
\href{mailto:duentsch@brocku.ca}{duentsch@brocku.ca}, \ \href{mailto:ivo@duentsch.net}{ivo@duentsch.net},
   \and Institute of Mathematics, University of Silesia\\ Katowice, Poland \\ \href{mailto:wojciech.dzik@us.edu.pl}{wojciech.dzik@us.edu.pl}, \
\href{mailto:wdzik@wdik.pl}{wdzik@wdzik.pl}
\and National Institute of Telecommunications \\
Szachowa 1,
04--894, Warszawa, Poland \\
\href{mailto:orlowska@itl.waw.pl}{orlowska@itl.waw.pl}
}
\begin{document}

\maketitle

\pagestyle{plain}

\makebox[\textwidth][r]{
\begin{minipage}[t]{0.48\textwidth}
\vspace{0mm}
\emph{Dedicated to our friends Hajnal and Istvan with respect and gratitude for long lasting inspiration}
\end{minipage}
}
\begin{abstract}
\noindent We investigate the join semilattice of modal operators on a Boolean algebra $B$. Furthermore, we consider pairs $\klam{f,g}$ of modal operators whose supremum is the unary discriminator on $B$, and study the associated bi--modal algebras.
\end{abstract}

\keywords{Algebraic logic, modal operators, unary discriminator, join semilattice, dual pseudocomplementation}

\section{Introduction}

Boolean algebras with operators were introduced by \citet{jt51} in connection with their investigations into  relation algebras. It was observed much later that the simplest case of modal algebras, that is, expansions of Boolean algebras with a single unary normal operation which preserves finite joins could serve as algebraic semantics for the minimal modal logic $\K$ and its extensions. It is straightforward to see that the set $\mb $ of modal operators on a Boolean algebra $B$ can be made into a bounded join semilattice; here the smallest element $f^\0$ is the constant mapping $f \equiv 0$, and the largest element $f^\1$ is the unary discriminator.

In this paper, we study the lattice theoretic properties of $\mb$, in particular, the existence and form of dual pseudocomplements. If $B$ is a complete Boolean algebra, the problems concerning $\mb $ are solved. In particular,  $\mb $ is dually pseudocomplemented if and only if $B$ is complete.

Generalizing the notion of pseudocomplement, we consider pairs $\klam{f,g}$ of modal operators (\emph{companions}) whose join in $\mb $ is the discriminator $f^\1$. Such pairs lead to bimodal algebras $\klam{B,f,g}$ which we call \emph{discriminator decomposition algebras}. These algebras give rise to a study of bimodal logics in which the modalities $ f, g$  are connected  by the discriminator decomposition condition. Such logics are not included in the main stream of research on multimodal logics.

We observe that the equational class generated by such algebras is equipollent to the class of algebraic models of the logic \Kt\ which generalizes both $\K$ and its complementary counterpart $\K^{\mathbf{*}}$ \cite{gpt87}.

In the final part of the paper we address the question when $f$ has a proper companion, i.e. a companion $g$ with $g \neq f^\1$, and give answers for several classes of modal algebras. It turns out that the existence of a proper companion can be expressed in a 1st order language, and is related to both the structure of $B$ and the modal operator $f$.  In particular, we conclude that this property is not a global property of the logic, but depends on the model, for instance, whether $B$ has an atom or not.  Connections to some classes of modal algebras are given, and examples are provided throughout.

\section{Notation and first definitions}

We regard an ordinal as the set of its predecessors, and cardinals as initial ordinals; $\omega$ is the first infinite ordinal  \cite{mos06}.   As in our context no generality is lost, we shall tacitly assume that a class $\A$ of algebras is closed under isomorphic copies. If no confusion can arise, we will refer to an algebra simply by its base set. The equational class generated by $\A$ is denoted by $\Eq(\A)$.

The \emph{ternary discriminator function} on an algebra $A$ is a function $t: A^3 \to A$ defined by
\begin{gather*}
t(a,b,c) =
\begin{cases}
a, &\text{if } a \neq b, \\
c, &\text{if } a = b.
\end{cases}
\end{gather*}
A ternary term $t(x,y,z)$ which represents the discriminator function is called a \emph{ternary discriminator for $A$}. If $\A$ is a class of algebras with a common discriminator term, then $\Eq(\A)$ is called a \emph{discriminator variety}. Discriminator varieties have very strong universal algebraic properties, see e.g.  \cite{wer78} or \cite{ajn91}.

 If $B$ is a Boolean algebra, this can be simplified:  A mapping $d: B \to B$ is a \emph{unary discriminator function}, if
\begin{gather}
d(a) =
\begin{cases}
0, &\text{if } a =0, \\
1, &\text{otherwise}.
\end{cases}
\end{gather}
A unary term $d(x)$ which represents the unary discriminator function on $B$ is called a \emph{unary discriminator for $B$}. The next observation is well known:
\begin{lemma}\label{lemNOdisc}
A Boolean algebra $B$ has a unary discriminator \tiff it has a ternary discriminator.
\end{lemma}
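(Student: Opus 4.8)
The plan is to settle the equivalence by writing down, in each direction, an explicit Boolean term that defines one discriminator out of the other; no genuine structure theory is needed here, only term manipulation. The single idea that makes both directions work is that the ternary discriminator decides its output according to whether $x = y$, and in a Boolean algebra the equation $x = y$ holds exactly when $x \symdiff y = 0$, that is, precisely when the unary discriminator returns $0$ on the argument $x \symdiff y$.

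For the implication from a unary to a ternary discriminator, suppose $d(x)$ is a unary term computing the unary discriminator and put
\[
t(x,y,z) \df \bigl(x \wedge d(x \symdiff y)\bigr) \vee \bigl(z \wedge (-d(x \symdiff y))\bigr).
\]
I would then check the two cases directly. If $x \neq y$ then $x \symdiff y \neq 0$, so $d(x \symdiff y) = 1$ and the term collapses to $x$; if $x = y$ then $x \symdiff y = 0$, so $d(x \symdiff y) = 0$ and the term collapses to $z$. Hence $t$ represents the ternary discriminator.

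For the converse, suppose $t(x,y,z)$ is a ternary term computing the ternary discriminator and put
\[
d(x) \df -\,t(0,x,1).
\]
Again two cases suffice. If $x \neq 0$ then $0 \neq x$, so $t(0,x,1)$ returns its first argument $0$ and $d(x) = -0 = 1$; if $x = 0$ then $0 = x$, so $t(0,x,1)$ returns its third argument $1$ and $d(x) = -1 = 0$. Thus $d$ represents the unary discriminator, which completes the equivalence.

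There is no real obstacle in this lemma: both terms are built solely from the Boolean operations together with the hypothesised discriminator term, so each is itself a legitimate term over $B$, and the verifications are just the routine case splits above. The only point requiring a moment's thought is the passage from testing $x = y$ to testing $x \symdiff y = 0$ in the first direction; once that reduction is in place, the remainder is bookkeeping.
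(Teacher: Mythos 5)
Your proof is correct: both explicit terms work, and the case checks are exactly as you describe (the key reduction of the equality test $x=y$ to $x\symdiff y=0$ is the standard device). The paper states this lemma as well known and omits the proof entirely, so your argument is precisely the routine verification the authors are alluding to.
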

A \emph{frame} is a pair $\klam{X,R}$ where $X$ is a set and $R$ a binary relation on $X$. The identity relation on $X$ is denoted by $1'_X$, or just by $1'$ if $X$ is understood; $V_X$ (or just $V$) denotes the universal relation. If $x \in X$, then $R(x) \df \set{y: xRy}$ is the \emph{range of $x$} (with respect to $R$).


Suppose that $\klam{P, \leq}$ is a partially ordered set and $Q \subseteq P$. Then $\da Q \df \set{x: (\exists y)[y \in Q \tand x \leq y]}$ is the \emph{downset generated by $Q$}. If $Q = \set{x}$, we just write $\da x$ if no confusion can arise. If $P$ has a smallest element $0$, then $Q^+ \df Q \setminus \set{0}$, otherwise, $Q^+ \df Q$. $Q$ is called \emph{dense (in $P$)} if for every $y \in P^+$ there is some $x \in Q^+$ such that $x \leq y$. $\ub(Q)$ is the set of all upper bounds of $Q$.

In the sequel, a semilattice is assumed to be a join semilattice. Suppose that $\klam{S, \lor, 1}$ is an upwardly bounded semilattice. A \emph{dual annihilator of $x \in S$} is some $y \in S$ such that $x \lor y = 1$. Such $y$ is \emph{proper} if $y \neq 1$. $x$ is called \emph{dually dense} if its only dual annihilator is $1$. If $x \in S$ has a smallest annihilator, this element is called the \emph{dual pseudocomplement of $x$}, denoted by $x^\bot$. An element $ x \in S$ is called \emph{open} if $x = y^{\bot}$ for some $y \in S$. If each $x \in S$ has a dual pseudocomplement, the structure $\klam{S,\lor, {}^\bot, 1}$ is called a \emph{dually pseudocomplemented semilattice}. It is well known that the class of dually pseudocomplemented semilattices is equational, see e.g. \cite[p 104]{blyth05}.

\begin{lemma}\label{lemNOoba}
\cite[Theorem 1]{frink62} If $\klam{S, \lor, {}^\bot,1}$ is a dually pseudocomplemented semilattice, then the set $O(S)$ of open elements of $S$ is a $1$ -- subsemilattice of $S$ and a Boolean algebra with $x \land_{O(S)} y \df (x^\bot \lor y^\bot)^\bot$.
\end{lemma}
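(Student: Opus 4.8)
The plan is to dualize the standard Glivenko--Frink analysis of the skeleton of a pseudocomplemented meet-semilattice. First I would record the arithmetic of the operation $^\bot$. Since every element has a dual pseudocomplement, $1^\bot$ exists and is the least element $0$ of $S$, and $0^\bot = 1$. Directly from the defining minimality one reads off: $x \lor x^\bot = 1$; the map $^\bot$ is antitone, i.e. $x \leq y$ implies $y^\bot \leq x^\bot$; $x^{\bot\bot} \leq x$ (because $x$ is itself an annihilator of $x^\bot$, so the least one lies below it); and hence $x^{\bot\bot\bot} = x^\bot$. Consequently $j(x) \df x^{\bot\bot}$ is monotone, decreasing and idempotent, that is, an interior (kernel) operator, whose fixed points are exactly the open elements:
\[
O(S) = \set{x^\bot : x \in S} = \set{x \in S : x = x^{\bot\bot}}.
\]

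Next I would show that $O(S)$ is a $1$-subsemilattice. As $1 = 0^\bot$, we have $1 \in O(S)$. If $a, b \in O(S)$, monotonicity of $j$ gives $a = j(a) \leq j(a \lor b)$ and $b \leq j(a \lor b)$, whence $a \lor b \leq j(a \lor b)$, while $j(a \lor b) \leq a \lor b$ since $j$ is decreasing; thus $a \lor b = (a \lor b)^{\bot\bot} \in O(S)$. Hence the $S$-join restricts to $O(S)$, so $O(S)$ is closed under $\lor$ and contains $1$.

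I would then extract the Boolean skeleton. Restricted to $O(S)$ the map $^\bot$ is an order-reversing involution ($a = a^{\bot\bot}$ for open $a$, and $a^\bot = (a^\bot)^{\bot\bot}$ is again open), hence a dual automorphism of $O(S)$. Therefore binary meets exist in $O(S)$ and are computed by the De Morgan formula $a \land_{O(S)} b = (a^\bot \lor b^\bot)^\bot$; a short check using antitonicity confirms that this is the greatest lower bound. With bottom $0 = 1^\bot$ and top $1$, the element $a^\bot$ is a complement of each open $a$: we have $a \lor a^\bot = 1$ by definition, and $a \land_{O(S)} a^\bot = (a^\bot \lor a^{\bot\bot})^\bot = (a \lor a^\bot)^\bot = 1^\bot = 0$. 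Thus $\klam{O(S), \lor, \land_{O(S)}, {}^\bot, 0, 1}$ is a bounded complemented lattice.

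The main obstacle is distributivity, since a complemented lattice need not be Boolean. Here the engine is Frink's key identity
\[
a \lor (a \lor b)^\bot = a \lor b^\bot \qquad (a, b \in S),
\]
which I would verify from minimality: $a \lor b^\bot$ annihilates $a \lor b$ (as $(a \lor b) \lor (a \lor b^\bot) = a \lor 1 = 1$), giving $(a \lor b)^\bot \leq a \lor b^\bot$ and hence ``$\leq$''; conversely $a \lor (a \lor b)^\bot$ annihilates $b$, giving $b^\bot \leq a \lor (a \lor b)^\bot$ and hence ``$\geq$''. Using this identity together with the De Morgan laws in $O(S)$ --- which are immediate, e.g. $(b \land_{O(S)} c)^\bot = b^\bot \lor c^\bot$ --- one reduces one of the distributive laws, say $a \land_{O(S)} (b \lor c) = (a \land_{O(S)} b) \lor (a \land_{O(S)} c)$, to an inequality between open elements that Frink's identity resolves; the other law then follows by applying the involution. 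I expect the bookkeeping in this final reduction, rather than any conceptual difficulty, to be the delicate part. Once distributivity is in hand, $O(S)$ is a complemented distributive lattice, i.e. a Boolean algebra with the stated meet, which completes the proof. \pfend
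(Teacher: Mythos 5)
The paper offers no proof of this lemma at all --- it simply cites Frink's Theorem~1 --- so there is no in-paper argument to compare against; what you have written is, in effect, a dualization of Frink's own proof. Every step you actually carry out is correct: the arithmetic of $^\bot$ (antitonicity, $x^{\bot\bot}\leq x$, $x^{\bot\bot\bot}=x^\bot$, $0=1^\bot$, $0^\bot=1$), the identification of $O(S)$ with the fixed points of the interior operator $x\mapsto x^{\bot\bot}$, closure of $O(S)$ under $\lor$ and $1$, the existence of meets via the antitone involution, the complementation check, and the verification of the identity $a\lor(a\lor b)^\bot=a\lor b^\bot$ are all sound.

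The soft spot is that the only genuinely nontrivial content of the theorem --- distributivity --- is asserted rather than proved: ``I expect the bookkeeping \dots to be the delicate part'' is not an argument, and since a bounded complemented lattice with an antitone involution need not be Boolean, the reduction has to be exhibited. It does close, along the following lines. Transporting your identity into $O(S)$, where $(a\lor b)^\bot=a^\bot\land_{O(S)}b^\bot$, and applying the involution yields
\begin{gather*}
x\land_{O(S)}(x^\bot\lor y)=x\land_{O(S)}y \qquad (x,y\in O(S)),
\end{gather*}
and from the observation $z^\bot=0\Iff z=1$ one obtains the disjointness criterion $x\land_{O(S)}y=0\Iff x\leq y^\bot$ for open $x,y$. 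Now put $d\df(a\land_{O(S)}b)\lor(a\land_{O(S)}c)$ and $w\df a\land_{O(S)}d^\bot$, so that $d^\bot=(a^\bot\lor b^\bot)\land_{O(S)}(a^\bot\lor c^\bot)$. Then $w\leq a\land_{O(S)}(a^\bot\lor b^\bot)=a\land_{O(S)}b^\bot\leq b^\bot$, and likewise $w\leq c^\bot$, whence $w\leq b^\bot\land_{O(S)}c^\bot=(b\lor c)^\bot$. By the disjointness criterion $\bigl(a\land_{O(S)}(b\lor c)\bigr)\land_{O(S)}d^\bot=0$, i.e. $a\land_{O(S)}(b\lor c)\leq d^{\bot\bot}=d$; the reverse inequality holds in any lattice, and the dual distributive law follows by applying the involution. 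With this paragraph inserted your proof is complete and agrees in substance with the cited source.
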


\subsection{Boolean algebras}

Throughout, $\klam{B, +, \cdot, -, 0,1}$ is a nontrivial Boolean algebra (BA), usually only referred to by its universe $B$. \two\ is the BA with universe $\set{0,1}$, and $\cB$ is the completion of $B$. The set of atoms of $B$ is denoted by $At(B)$, and the set of ultrafilters of $B$ is denoted by $\ult(B)$. The mapping $h: B \to \ult(B)$ with $h(x) = \set{F \in \ult(B): x \in F}$ is the Stone embedding.

We write $a = a_0 \ds \ldots \ds a_n$, if $a = a_0 + \ldots + a_n$, and the $a_i$ are nonzero and pairwise disjoint. The symmetric difference $x \cdot \c{y} + \c{x} \cdot y$ of $x,y \in B$ is denoted by $x \symdiff y$.

$B$ is called a \emph{finite--cofinite} algebra (FC--algebra), if every element of $B \setminus \2$ is a finite sum of atoms or the complement of such an element.  If $B$ is an FC--algebra, $\kappa$ a cardinal, and $\card{B} = \kappa$, then $B$ is isomorphic to the BA $FC(\kappa)$ which is generated by the one element subsets of $\kappa$. If $\gamma \in \kappa$, we let $F_\gamma$ be the ultrafilter of $FC(\kappa)$ generated by $\set{\gamma}$, and $F_\kappa$ be the ultrafilter of cofinite sets.

If $M$ is dense in $B$ and $x \in B^+$, then $x = \sum\set{y \in M: y \leq x}$. Moreover, there is a pairwise disjoint family $M' \subseteq M$ such that $x = \sum M'$ \cite[Lemma 4.9.]{kop89}.

Recall some facts about Boolean interval algebras: Let $L$ be a linear order with smallest element $0_m$. Suppose that $\infty$ is a symbol not in $L$, and set $L^\infty \df L \cup\set{\infty}$ with $x \lneq \infty$ for all $x \in L$. An \emph{interval of $L$} is a set of the form $[s,t) = \set{u \in L: s \leq u \lneq t}$, where $s,t \in L^\infty$.  $IntAlg(L)$ is the collection of all finite unions of intervals
\begin{gather}\label{int}
[x^0_0, x^1_0) \cup [x^0_1, x^1_1) \cup \ldots \cup [x^0_{t(x)}, x^1_{t(x)}),
\end{gather}
together with the empty set. It is well known that $IntAlg(L)$ is a Boolean algebra \cite[p.10]{kop89}, called \emph{the interval algebra of $L$}.
Each nonzero $x \in IntAlg(L)$ can be written in the form \eqref{int} in such a way that $x^i_j \in L^+$, $x^0_j < x^1_j < x^0_{j+1}$; note that the intervals $[x^0_j, x^1_j)$ are pairwise disjoint. The representation of $x$ in this form is unique \cite[p. 242]{kop89}, and we call it the {\em standard representation}. For each $x \in IntAlg(L)^+$, we let
\begin{align*}
\rel(x) &\df \set{x^0_j: j \le t(x)} \cup \set{x^1_j: j \le t(x)} \\
\intertext{be the set of {\em relevant points} of $x$, and}
\Int(x) &\df \set{[x^0_j, x^1_j): j \leq t(x)}
\end{align*}
be the set of \emph{relevant intervals of $x$}.

For unexplained notation and concepts in the area of universal algebra the reader is invited to consult \cite{bs_ua}, and for Boolean algebras we refer the reader to \cite{kop89}.

\section{Modal algebras}

An \emph{operator} on $B$ is a mapping $B \to B$; note that this is more general than the terminology of \cite{jt51}. If $f$ is an operator on $B$, then its \emph{dual (operator)} $f^\partial$ is defined by $f^\partial (a) \df - f(-a)$. We also set $f^\ast(a) \df -f(a)$, and $f_\ast(a) \df f(-a)$; clearly, $f^* = (f^\partial)_\ast$.

A mapping $f: B \to B$ $f$ is called \emph{normal} if $f(0) = 0$, and \emph{additive} if $f(x + y) = f(x) + f(y)$ for all $x,y \in B$. A \emph{modal operator} $f$ on $B$ is a normal and additive mapping. In this case, $\klam{B,f}$ is called a \emph{modal algebra}. The class of modal algebras is denoted by $\moa$. It may be remarked that \moa\ is not locally finite, indeed, a modal algebra need not have a finite subalgebra; an example can be found in \cite[p 251]{jon93}.

A modal operator $f$ on $B$ is called \emph{completely additive}, or simply \emph{complete}, if for every $M \subseteq B$ such that $\sum M$ exists, $\sum\set{f(x): x \in M}$ also exists and is equal to $f(\sum M)$.

An ideal $I$ of $B$ is called \emph{trivial}, if $I = \set{0}$, \emph{proper} if $I \neq B$, and  \emph{closed}, if $a \in I$ implies $f(a) \in I$. It is well known that there is a one--one correspondence between closed ideals and congruences on $\klam{B,f}$, see e.g. \cite[\S 3]{blok80a}, and therefore, we will call closed ideals also \emph{congruence ideals}.

A modal operator $f$ is called a \emph{closure operator} or \emph{S4 operator}, if it satisfies
\begin{enumerate}[{Cl}$_1$]
\item $x \leq f(x)$,
\item $f(f(x)) = f(x)$.
\end{enumerate}
In this case, $\klam{B,f}$ is called a \emph{closure algebra} or an \emph{S4 algebra}.

If $\X \df \klam{X,R}$ is a frame, we define a mapping $\poss{R}: 2^X \to 2^X$ by $\poss{R}(Y) = \set{x: R(x) \cap Y \neq \z}$. In modal logic, $\poss{R}$ is the (interpretation of) the \emph{diamond operator} $\Diamond$ in the frame \X. The structure $\klam{2^X, \poss{R}}$ is called the \emph{complex algebra of $\X$}, denoted by $\Cm(\X)$.
\begin{theorem}\label{thmNOframealg} \cite[Theorem 3.9]{jt51}
\begin{enumerate}
\item $\Cm(\X)$ is  complete and atomic, and $\poss{R}$  is a completely additive normal operator.
\item If $\B = \klam{B,f}$ is a complete atomic Boolean algebra and $f$ is completely additive, then $\B$ is isomorphic to some $\Cm(\klam{X,R})$. \pfend
\end{enumerate}
\end{theorem}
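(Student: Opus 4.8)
The plan is to treat the two parts separately, the first being a direct verification and the second the substantive representation result.

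For part (1), the underlying Boolean algebra of $\Cm(\X)$ is the full power set $2^X$, which is complete and atomic: arbitrary joins are unions, arbitrary meets are intersections, and the atoms are exactly the singletons $\set{x}$, $x \in X$. It therefore remains only to check the stated properties of $\poss{R}$. Normality, $\poss{R}(\z) = \z$, is immediate since $R(x) \cap \z = \z$ for every $x$. For complete additivity, given any family $\set{Y_i : i \in I} \subseteq 2^X$ I would unwind the definition: $x \in \poss{R}(\bigcup_i Y_i)$ holds iff $R(x)$ meets $\bigcup_i Y_i$, iff $R(x)$ meets $Y_i$ for some $i$, iff $x \in \bigcup_i \poss{R}(Y_i)$. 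Since all joins in $2^X$ exist, this establishes $\poss{R}(\sum_i Y_i) = \sum_i \poss{R}(Y_i)$, as required.

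For part (2), the first step is to represent the complete atomic algebra $B$ concretely. Put $X \df At(B)$ and define $\iota : B \to 2^X$ by $\iota(a) \df \set{x \in X : x \leq a}$. I would verify that $\iota$ is a Boolean isomorphism: it is a homomorphism by routine computation, it is injective because $B$ is atomic (if $a \not\leq b$ then some atom lies below $a \cdot \c{b}$), and it is surjective because $B$ is complete, the preimage of $S \subseteq X$ being $\sum S$. The one point deserving care is that an atom $x$ satisfies $x \leq \sum_i b_i$ only if $x \leq b_i$ for some $i$; but this holds in every Boolean algebra, since otherwise $x \cdot b_i = 0$, hence $b_i \leq \c{x}$, for all $i$, forcing $\sum_i b_i \leq \c{x}$ and so $x \leq 0$, a contradiction.

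With $B$ identified with $2^X$, I would define the frame relation by $x \mathrel{R} y \Iff x \leq f(y)$ for atoms $x,y$, and then show $\iota \circ f = \poss{R} \circ \iota$. Unwinding, $\poss{R}(\iota(a))$ is the set of atoms $x$ for which there is an atom $y \leq a$ with $x \leq f(y)$. On the other hand $\iota(f(a))$ is the set of atoms below $f(a)$, and since $a = \sum\set{y \in X : y \leq a}$, complete additivity gives $f(a) = \sum\set{f(y) : y \in X,\ y \leq a}$. Applying the complete join--primeness of atoms established above, an atom $x$ lies below this join iff $x \leq f(y)$ for some atom $y \leq a$ --- exactly the description of $\poss{R}(\iota(a))$. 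Hence $\klam{B,f} \cong \Cm(\klam{X,R})$. I expect the main obstacle to be purely bookkeeping: keeping the direction of $R$ consistent with the convention $\poss{R}(Y) = \set{x : R(x) \cap Y \neq \z}$, and making explicit that completeness is precisely what guarantees the suprema appearing in the complete--additivity hypothesis exist in $B$.
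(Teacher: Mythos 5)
The paper does not prove this statement; it is quoted from J\'onsson--Tarski with only a citation, so there is no in-paper argument to compare against. Your reconstruction is the standard (and correct) proof: part (1) is the routine verification on $2^X$, and part (2) is the canonical representation via $X = At(B)$, the isomorphism $\iota(a) = \set{x \in At(B) : x \leq a}$, the relation $x \mathrel{R} y \Iff x \leq f(y)$ (which matches the paper's convention \eqref{RfAt}), and the complete join--primeness of atoms --- all steps you identify as needing care are exactly the right ones, and your arguments for them are sound.
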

The \emph{canonical structure} of a modal algebra $\B \df \klam{B,f}$ is the frame $\Cf(\B) \df \klam{\ult(B), R_f}$, where
\begin{gather}\label{canrel}
FR_fG \Iff f[G] \subseteq F \text{ i.e. } \set{f(a): a \in G} \subseteq F.
\end{gather}
$R_f$ is called \emph{the canonical relation of $f$}. If $a,b \in At(B)$, and $F_a, F_b$ are the principal ultrafilters generated by $a$, respectively, $b$, then
\begin{gather}\label{RfAt}
F_a R_f F_b \Iff a \leq f(b).
\end{gather}

Observe that $R_f$ is the universal relation on $\ult(B)$ \tiff $f = f^\1$, and $R_f$ is the empty relation \tiff $f = f^\0$.

The following representation theorem is seminal for algebraic semantics of modal logics:.
\begin{theorem}\label{thmNOrep} \cite[Theorem 3.10]{jt51}
Let $\B \df \klam{B,f} \in \moa$, and $h: B \to 2^{\ult(B)}$ be defined by $h(a) = \set{F \in \ult(B): a \in F}$. Then, $h$ is a MOA embedding into $\klam{2^{\ult(B)}, \poss{R_f}} ( = \Cm\Cf(\B))$; in particular, $h(f(a)) = \poss{R_f}h(a)$.
\end{theorem}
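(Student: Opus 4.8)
The plan is to decompose the claim into two parts: first that $h$ is a Boolean-algebra embedding of $B$ into the Boolean reduct $2^{\ult(B)}$, and second that $h$ transports $f$ to the diamond $\poss{R_f}$, that is, $h(f(a)) = \poss{R_f}h(a)$ for all $a \in B$. Note that the target algebra $\klam{2^{\ult(B)}, \poss{R_f}}$ is exactly $\Cm\Cf(\B)$ by the definitions of the canonical structure $\Cf(\B) = \klam{\ult(B), R_f}$ and of the complex algebra. The first part is the classical Stone representation: using that each $F$ is a prime filter one checks $h(a+b) = h(a) \cup h(b)$, $h(a \cdot b) = h(a) \cap h(b)$ and $h(-a) = \ult(B) \setminus h(a)$, while injectivity follows from the ultrafilter theorem, since if $a \neq b$ then $a \symdiff b \neq 0$ and some ultrafilter contains it, separating $a$ and $b$. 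I would treat this part as standard and concentrate on the modal clause.

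For the modal clause, the inclusion $\poss{R_f}h(a) \subseteq h(f(a))$ is immediate: if $F \in \poss{R_f}h(a)$ then there is $G \in \ult(B)$ with $F R_f G$ and $a \in G$; since $F R_f G$ means $f[G] \subseteq F$ and $a \in G$, we get $f(a) \in F$, i.e. $F \in h(f(a))$. The substance is the reverse inclusion $h(f(a)) \subseteq \poss{R_f}h(a)$, which is an existence lemma: given an ultrafilter $F$ with $f(a) \in F$, I must produce an ultrafilter $G$ with $a \in G$ and $f[G] \subseteq F$.

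To build $G$, I would use the dual operator $f^\partial$ already introduced, and set $H \df \set{b \in B : f^\partial(b) \in F} = \set{b : f(-b) \notin F}$, the ``box-preimage'' of $F$. Since $f$ is normal and additive, $f^\partial$ preserves $1$ and binary meets and is monotone, from which $H$ is a filter; it is proper because $f(a) \in F$ forces $f(1) \in F$ (as $a \leq 1$ gives $f(a) \leq f(1)$), hence $f^\partial(0) = -f(1) \notin F$ and $0 \notin H$. Next I would check that $a$ is compatible with $H$: if $a \cdot b = 0$ for some $b \in H$, then $a \leq -b$ and monotonicity gives $f(a) \leq f(-b)$, so $f(-b) \in F$, contradicting $b \in H$. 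Thus the filter generated by $H \cup \set{a}$ is proper and extends, by the ultrafilter theorem, to an ultrafilter $G \ni a$ with $H \subseteq G$. Finally, to see $f[G] \subseteq F$, take $b \in G$ and suppose $f(b) \notin F$; then $f^\partial(-b) = -f(b) \in F$, so $-b \in H \subseteq G$, contradicting $b \in G$. Hence $f(b) \in F$ for every $b \in G$, i.e. $F R_f G$, and therefore $F \in \poss{R_f}h(a)$.

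The main obstacle is precisely this existence lemma --- showing that the candidate filter $H$ is proper and compatible with $a$, and that the resulting $G$ is $R_f$-accessible from $F$. The key realization that unlocks it is that the correct auxiliary filter is the $f^\partial$-preimage of $F$ rather than anything built directly from $f$, and that normality and additivity of $f$ are exactly the hypotheses needed to make $H$ a proper filter and to drive the two disjointness contradictions.
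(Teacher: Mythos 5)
Your proof is correct. The paper itself gives no proof of this statement --- it is quoted as Theorem 3.10 of J\'onsson--Tarski --- and your argument is exactly the standard one from that source: Stone representation for the Boolean reduct, the easy inclusion $\poss{R_f}h(a) \subseteq h(f(a))$ directly from the definition of $R_f$, and the existence lemma via the proper filter $H = \set{b : f^\partial(b) \in F}$, its compatibility with $a$, and an ultrafilter extension. All the individual steps (properness of $H$ from $f(a) \leq f(1)$, compatibility from monotonicity of $f$, and $f[G] \subseteq F$ from $H \subseteq G$) check out.
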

The algebra $\Cm\Cf(\B)$ is called the \emph{canonical extension of $\B$},  denoted by  $\B^\sigma$. It is well known that $\B \cong \B^\sigma$ \tiff $B$ is finite. We will sometimes assume that \wlg $\klam{B,f}$ is a subalgebra of $\B^\sigma$. In this case, $\poss{R_f}$ is denoted by $f^+$. We shall usually just write $B^\sigma$ instead of $\B^\sigma$.

 For a  history of and an introduction to Boolean algebras with operators, the reader is invited to consult \cite{jon93}.

We shall use several axioms of modal logics in their algebraic form and the corresponding property of their canonical relation:
\begin{enumerate}
\renewcommand{\theenumi}{K}
\item\label{K} $f(0) = 0$ and $f(x + y) = f(x) + f(y)$. \hfill{$R_f$ is a binary relation.}
\renewcommand{\theenumi}{T}
\item\label{T} $x \leq f(x)$. \hfill{$R_f$ is reflexive.}
\renewcommand{\theenumi}{4}
\item\label{4} $f(f(x)) \leq f(x)$. \hfill{$R_f$ is transitive.}
\renewcommand{\theenumi}{B}
\item\label{B} $f(f^\partial(x)) \leq x$. \hfill{$R_f$ is symmetric.}
\end{enumerate}

Conjunctions of axioms are usually written in juxtaposition of the axioms; for example,
KT means a modal logic based on the axioms \ref{K} and \ref{T}. Some abbreviations are common:
\begin{center}
KT4 $\mapsto$ S4, \quad KT4B $\mapsto$ S5.
\end{center}

The next result provides a convenient criterion for a modal algebra to be subdirectly irreducible:

\begin{theorem} (Rautenberg's criterion \cite[p. 155]{rau80})
Let $\B = \klam{B,f}$ be a modal algebra. Then, $\B$ is subdirectly irreducible \tiff
\begin{gather}\label{raut1}
(\exists a \neq 1)(\forall b \neq 1)(\exists n \in \omega) b \cdot f^\partial(b) \cdot (f^{\partial})^2(b) \cdot \ldots \cdot (f^{\partial})^n(b) \leq a.
\end{gather}
If \B\ is a K4 algebra, then \eqref{raut1} is equivalent to the statement
\begin{align}
&(\exists a \neq 1)(\forall b \neq 1)b \cdot f^\partial (b) \leq a, \label{raut2}
\end{align}
and if \B\ is an S4 algebra, i.e. if $f$ is a closure operator, then \eqref{raut1} is equivalent to
\begin{align}
&(\exists a \neq 1)(\forall b \neq 1) f^\partial (b) \leq a. \label{raut3}
\end{align}
\end{theorem}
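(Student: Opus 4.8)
The plan is to recast subdirect irreducibility through the monolith. Recall that $\B = \klam{B,f} \in \moa$ is subdirectly irreducible \tiff the intersection of all of its nontrivial congruences is again nontrivial. I would translate this into filters: complementation is an order isomorphism from the lattice of ideals of $B$ onto the lattice of filters of $B$, and it carries the congruence (i.e.\ closed) ideals onto the filters $F$ with $f^\partial[F] \subseteq F$, which I will call \emph{congruence filters}. Under this duality the identity congruence corresponds to $\set{1}$ and the full congruence to $B$, and arbitrary intersections are preserved; hence $\B$ is subdirectly irreducible \tiff the intersection of all congruence filters properly containing $\set{1}$ is itself $\neq \set{1}$.

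First I would compute the congruence filter $\Phi_b$ generated by a single $b \neq 1$. Since $f$ is normal and additive, its dual satisfies $f^\partial(1) = 1$, is monotone, and is multiplicative, i.e.\ $f^\partial(x \cdot y) = f^\partial(x) \cdot f^\partial(y)$. Writing $p_n(b) \df b \cdot f^\partial(b) \cdot (f^\partial)^2(b) \cdot \ldots \cdot (f^\partial)^n(b)$, the sequence $p_0(b) \geq p_1(b) \geq \ldots$ is decreasing, and the claim is that
\[
\Phi_b = \set{x \in B : p_n(b) \leq x \text{ for some } n \in \omega}.
\]
This set is plainly a filter containing $b$; closure under $f^\partial$ follows from multiplicativity and monotonicity, since $f^\partial(p_n(b)) = f^\partial(b) \cdot \ldots \cdot (f^\partial)^{n+1}(b) \geq p_{n+1}(b)$. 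Minimality holds because any congruence filter containing $b$ must contain each $(f^\partial)^i(b)$, hence each finite meet $p_n(b)$.

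Next I would reduce the intersection over all nontrivial congruence filters to the principal ones. Every congruence filter $\neq \set{1}$ contains some $b \neq 1$ and therefore contains $\Phi_b$, so $\bigcap\set{F : F \neq \set{1} \text{ a congruence filter}} = \bigcap_{b \neq 1} \Phi_b$. Thus $\B$ is subdirectly irreducible \tiff there is an $a \neq 1$ lying in every $\Phi_b$, which by the description of $\Phi_b$ means exactly that for each $b \neq 1$ some $p_n(b) \leq a$. This is condition \eqref{raut1}.

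Finally, the two specializations follow from order relations that the axioms impose on $f^\partial$. Axiom \ref{4}, $f(f(x)) \leq f(x)$, dualizes to $(f^\partial)^2(x) \geq f^\partial(x)$, so $(f^\partial)^i(b) \geq f^\partial(b)$ for all $i \geq 1$ and the meet $p_n(b)$ already stabilizes at $p_1(b) = b \cdot f^\partial(b)$; the quantifier $(\exists n)$ then collapses and \eqref{raut1} becomes \eqref{raut2}. For an S4 algebra we also have \ref{T}, $x \leq f(x)$, which dualizes to $f^\partial(x) \leq x$, whence $b \cdot f^\partial(b) = f^\partial(b)$ and \eqref{raut2} simplifies to \eqref{raut3}. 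I expect the main obstacle to be the bookkeeping of the duality $I \leftrightarrow F$ together with the verification that $\Phi_b$ is the least congruence filter through $b$; once the closure inequality $f^\partial(p_n(b)) \geq p_{n+1}(b)$ is established the remaining steps are routine, and in the \ref{4} and \ref{T} reductions the only delicate point is that passing to $f^\partial$ reverses the inequalities.
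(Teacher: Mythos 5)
Your argument is correct. Note that the paper does not prove this theorem at all --- it is quoted from Rautenberg's paper with only a citation --- so there is no in-text proof to compare against; your filter-theoretic derivation stands as a sound, self-contained justification. All the key steps check out: congruences of $\klam{B,f}$ correspond to closed ideals (as the paper itself recalls), and complementation carries these onto the filters closed under $f^\partial$, which is normal at $1$, monotone and multiplicative since $f$ is normal and additive; your description of the least such filter $\Phi_b$ through $b$ as the upward closure of the decreasing chain $p_n(b)$ is right, the inequality $f^\partial(p_n(b)) \geq p_{n+1}(b)$ being exactly what closure under $f^\partial$ requires; and the reduction of the monolith condition to the principal congruence filters is legitimate because every congruence filter $\neq \set{1}$ contains some $\Phi_b$ with $b \neq 1$. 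The two specializations are also handled correctly: axiom \ref{4} dualizes to $f^\partial(x) \leq (f^\partial)^2(x)$, so $p_n(b) = b \cdot f^\partial(b)$ for all $n \geq 1$ (and $p_0(b) = b \geq p_1(b)$ absorbs the $n=0$ case), while \ref{T} dualizes to $f^\partial(x) \leq x$, collapsing $b \cdot f^\partial(b)$ to $f^\partial(b)$. The only point I would make explicit in a final write-up is the convention that ``nontrivial congruence'' means $\neq \Delta$ (the improper congruence is allowed), so that $\Phi_b$ need not be a proper filter for the intersection argument to go through.
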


\section{The semilattice of modal operators}

 Let $\mb $ be the set of all modal operators on a fixed Boolean algebra $B$.  For $f,g \in \mb $ set $(f \lor g)(x) \df f(x) + g(x)$. Furthermore, let $f^\0 :\equiv 0$, and
 \begin{gather*}
 f^\1(x) \df
 \begin{cases}
 0, &\text{if } x = 0, \\
 1, &\text{otherwise.}
 \end{cases}
 \end{gather*}
 Note that $f^\1$ is the unary discriminator on $B$. In modal logics $f^\1$ is known as the \emph{universal modality} \cite{gpt87,gp92}.

The following observation which, is straightforward to prove, is the basis for the considerations in this section:
 \begin{theorem}
 $\klam{\mb , \lor, f^\0, f^\1}$ is a bounded (join) semilattice.
 \end{theorem}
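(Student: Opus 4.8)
The plan is to verify directly the defining clauses of a bounded join semilattice, namely that $\mb$ is closed under the operation $\lor$, that $\lor$ is commutative, associative, and idempotent, and that $f^\0$ and $f^\1$ are respectively the least and greatest elements of the induced order. Since $\lor$ is defined pointwise by $(f \lor g)(x) \df f(x) + g(x)$, each of these facts will follow by transferring the corresponding property of the underlying Boolean join $+$ of $B$ up to the level of operators, quantifying over all $x \in B$.

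First I would establish closure. Given $f, g \in \mb$, the map $f \lor g$ is normal because $(f \lor g)(0) = f(0) + g(0) = 0$, and it is additive because
\begin{gather*}
(f \lor g)(x + y) = f(x+y) + g(x+y) = f(x) + f(y) + g(x) + g(y) = (f \lor g)(x) + (f \lor g)(y),
\end{gather*}
where the middle step uses additivity of $f$ and $g$ together with the commutativity and associativity of $+$ in $B$. Hence $f \lor g \in \mb$. The semilattice laws are then immediate: commutativity and associativity of $\lor$ are inherited pointwise from those of $+$, and idempotency follows from $(f \lor f)(x) = f(x) + f(x) = f(x)$ for all $x$.

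It remains to treat the bounds. That $f^\0$ is a modal operator and that $f^\0 \lor f = f$ are trivial, so $f^\0$ is the least element. For the greatest element one first checks that $f^\1 \in \mb$: it is normal by definition, and for additivity one argues by cases on whether $x + y = 0$. If $x + y = 0$ then $x = y = 0$ and both sides vanish; if $x + y \neq 0$ then $f^\1(x+y) = 1$, while at least one of $x, y$ is nonzero, so $f^\1(x) + f^\1(y) = 1$ as well. Granting this, $f \lor f^\1 = f^\1$ follows by the same case split, since $f(x) + 1 = 1$ whenever $x \neq 0$; thus $f \leq f^\1$ for every $f \in \mb$.

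The computation is entirely routine, and the only point requiring a moment's care is the verification that the discriminator $f^\1$ is itself additive, i.e.\ that it genuinely lies in $\mb$; this is precisely where the case distinction on $x + y = 0$ is needed, whereas every other clause transfers verbatim from the arithmetic of $+$ in $B$.
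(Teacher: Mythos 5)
Your proof is correct and complete; the paper itself omits the argument, remarking only that the theorem is straightforward to prove, and your direct pointwise verification (including the case split needed to check that $f^\1$ is additive) is exactly the intended routine argument.
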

We remark in passing that the structure $\klam{\mb , \lor, f^\0, f^\1, \circ, 1'}$ is a bounded idempotent semiring where $\circ$ is composition of functions and $1'$ is the identity.

For each $x \in B$ define the relativization of $f^\1$ to $\da x$ by

\begin{gather*}
f_x(y) \df
\begin{cases}
0, &\text{if } y =0, \\
x, &\text{otherwise.}
\end{cases}
\end{gather*}
Clearly, each $f_x$ is a modal operator on $B$.

\begin{theorem}\label{thmNOcompsl}
$\mb $ is a complete semilattice \tiff $B$ is complete.
\end{theorem}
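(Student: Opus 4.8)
The plan is to exploit the fact that the semilattice order on $\mb$ is the pointwise order: since $f \lor g = g$ means $f(x) + g(x) = g(x)$, i.e.\ $f(x) \leq g(x)$, for every $x$, we have $f \leq g$ in $\mb$ exactly when $f(x) \leq g(x)$ in $B$ for all $x$. Consequently a family $\set{f_i : i \in I} \subseteq \mb$ has a supremum in $\mb$ iff there is a \emph{least} modal operator dominating every $f_i$ pointwise. Both implications will be proved by passing between suprema in $B$ and suprema in $\mb$, the bridge being the operators $f_x$ introduced just above.

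For the direction ``$B$ complete $\Implies$ $\mb$ complete'', I would take an arbitrary family $\set{f_i : i \in I}$ and form the pointwise supremum $f(x) \df \sum_{i} f_i(x)$, which exists because $B$ is complete. Normality is immediate, and additivity reduces to the identity $\sum_i (a_i + b_i) = \sum_i a_i + \sum_i b_i$, valid in any complete Boolean algebra: the inequality $\leq$ follows from $a_i + b_i \leq \sum_j a_j + \sum_j b_j$, and $\geq$ from $a_i \leq a_i + b_i \leq \sum_j(a_j+b_j)$ together with its symmetric companion. Thus $f \in \mb$, and since $f$ dominates each $f_i$ pointwise while any modal upper bound $h$ satisfies $h(x) \geq \sum_i f_i(x) = f(x)$, the operator $f$ is the supremum of the family in $\mb$.

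For the converse ``$\mb$ complete $\Implies$ $B$ complete'', I would use that $x \mapsto f_x$ is an order embedding of $B$ into $\mb$ (indeed $f_x \leq f_{x'} \Iff x \leq x'$, read off at any nonzero argument). Given a family $\set{x_i : i \in I} \subseteq B$, completeness of $\mb$ yields a supremum $g$ of $\set{f_{x_i} : i \in I}$. Being an upper bound, $g$ satisfies $g(1) \geq f_{x_i}(1) = x_i$ for all $i$, so $g(1)$ is an upper bound of $\set{x_i}$ in $B$; and if $b$ is any upper bound of $\set{x_i}$ in $B$, then $f_b \geq f_{x_i}$ for all $i$, whence $g \leq f_b$ by minimality of $g$, and therefore $g(1) \leq f_b(1) = b$. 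Hence $g(1) = \sup_i x_i$, so every subset of $B$ has a supremum and $B$ is complete.

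The one point that needs care, and which I regard as the crux, is the converse direction: the supremum $g$ supplied by completeness of $\mb$ is \emph{a priori} only an abstract least upper bound, not a pointwise supremum, since the latter may fail to exist in $B$ before we know that $B$ is complete. The argument must therefore avoid evaluating $g$ as a pointwise join and instead extract $\sup_i x_i$ purely from the universal property of $g$ together with the embedding $x \mapsto f_x$, exactly as above.
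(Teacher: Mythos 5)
Your proof is correct and follows essentially the same route as the paper's: the forward direction recovers $\sum M$ as the value of $\bigvee\set{f_x : x \in M}$ at a nonzero argument using only the universal property of the abstract supremum, and the converse takes pointwise suprema. You are in fact slightly more careful than the paper in two places -- verifying that the pointwise supremum is additive, and flagging explicitly that the abstract supremum in $\mb$ cannot be assumed to be pointwise before $B$ is known to be complete.
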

\begin{proof}
\aright Suppose that $\z \neq M \subseteq B$.  Let $f \df \bigvee\set{f_x: x \in M}$, and choose some $y \in B^+$. Our aim is to show that $f(y)$ is the least upper bound of $M$. Since $f_x(y) = x$ and $f_x \leq f$, it is clear that $f(y)$ is an upper bound of $M$. Next, let $z$ be an upper bound of $M$. Then, $x \leq z$ for all $x \in M$, and therefore, $f_x \leq f_z$. It follows that $f \leq f_z$, hence, $f(y) \leq f_z(y)  = z$, the latter by definition of $f_z$.

\aleft For $\z \neq Q \subseteq \mb $ set $ \bigvee Q(a) \df \sum\set{f(a): f \in Q}$ for each $a \in B$. In this case it is in fact a complete lattice: If $\z \neq Q \subseteq \mb $, then $f^\0$ is a lower bound of $Q$, and thus, $\bigvee\set{f \in \mb : f \text{ is a lower bound of } Q}$ is well defined, and clearly, it is the greatest lower bound of $Q$.
\pfend\end{proof}

Our next topic in this section is the existence of dual pseudocomplements in $\mb $. We start with a characterization of pseudocomplements in $\mb $:

\begin{lemma}\label{thmNOprep}
Let $f \in \mb $. If $f$ has a dual pseudocomplement $f^\bot$, then $f^\bot(x) = \sum\set{-f(y): 0 \lneq y \leq x}$ for each $x \in B^+$. Conversely, for each $x \in B^+$, if ~ $\sum\set{-f(y): 0 \lneq y \leq x}$ exists, then  $f^\bot(x) = \sum\set{-f(y): 0 \lneq y \leq x}$.
\end{lemma}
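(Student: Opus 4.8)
The plan is to work entirely with the pointwise description of the semilattice. Since $f \lor g$ is computed pointwise, the induced order is $g \leq g'$ iff $g(x) \leq g'(x)$ for all $x$, and $g$ is a dual annihilator of $f$ exactly when $g \in \mb$ and $f(x) + g(x) = 1$, i.e. $g(x) \geq -f(x)$, for every $x \in B^+$ (the value at $0$ being forced by normality). Thus $f^\bot$, if it exists, is the pointwise least modal operator dominating $-f$ on $B^+$. The first preliminary step is an elementary lower bound: every modal operator is monotone, since additivity gives $x \leq x' \Rightarrow g(x) \leq g(x')$, so any dual annihilator $g$ satisfies $g(x) \geq -f(y)$ for all $0 \lneq y \leq x$; hence $g(x)$ is an upper bound of $S_x \df \set{-f(y): 0 \lneq y \leq x}$. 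This single observation drives both directions.

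For the converse I would assume $\sum S_x$ exists for every $x \in B^+$ and define $g(x) \df \sum S_x$ (and $g(0) \df 0$). Normality is immediate, and $-f(x) \in S_x$ shows $g(x) \geq -f(x)$, so $g$ is a dual annihilator as soon as it is known to be a modal operator. The one computation needing care is additivity: $S_x \cup S_z \subseteq S_{x+z}$ gives $g(x) + g(z) \leq g(x+z)$ for free, and for the reverse I take $0 \lneq y \leq x+z$, split $y = y\cdot x + y\cdot z$, and use additivity of $f$ to write $-f(y) = -f(y\cdot x)\cdot -f(y\cdot z)$. Whichever summand is nonzero contributes a generator of $S_x$ or $S_z$ lying below $g(x)+g(z)$, so $-f(y) \leq g(x)+g(z)$ for every generator, whence $g(x+z) \leq g(x)+g(z)$. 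Thus $g \in \mb$, and by the preliminary lower bound any dual annihilator $g'$ satisfies $g'(x) \geq \sum S_x = g(x)$, so $g$ is the least dual annihilator, i.e. $g = f^\bot$.

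The forward direction is where the real work lies, and I expect it to be the main obstacle. Assuming $f^\bot$ exists, it is a dual annihilator, so by the preliminary step $f^\bot(x)$ is an upper bound of $S_x$; the task is to promote this to \emph{least} upper bound, which simultaneously yields existence of $\sum S_x$. Fix $x_0 \in B^+$ and an arbitrary $u \in \ub(S_{x_0})$. The key device is the capped operator
\begin{gather*}
g(z) \df f^\bot(z)\cdot u + f^\bot(z \cdot (-x_0)),
\end{gather*}
which is visibly normal and additive, each summand being so. I would check it is still a dual annihilator by splitting $z = z\cdot x_0 + z\cdot(-x_0)$: if $z\cdot(-x_0) = 0$ then $z \leq x_0$, so $-f(z) \in S_{x_0}$ forces $-f(z) \leq u$ and $-f(z) \leq f^\bot(z)$, giving $-f(z) \leq f^\bot(z)\cdot u = g(z)$; otherwise $z\cdot(-x_0) \neq 0$ and $-f(z) \leq -f(z\cdot(-x_0)) \leq f^\bot(z\cdot(-x_0)) \leq g(z)$. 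Either way $g(z) \geq -f(z)$. But $g(x_0) = f^\bot(x_0)\cdot u \leq u$, so minimality of $f^\bot$ gives $f^\bot(x_0) \leq g(x_0) \leq u$. As $u$ ranged over all upper bounds of $S_{x_0}$, we conclude $f^\bot(x_0) = \sum S_{x_0}$.

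The delicate point worth flagging is that the forward direction cannot simply invoke the converse: that would be circular, since existence of $\sum S_x$ is exactly what is not yet known. The capping operator $g$ is precisely the instrument that converts an arbitrary upper bound $u$ into a competing dual annihilator lying below $f^\bot$ at $x_0$, thereby forcing $f^\bot(x_0)$ to be the least upper bound.
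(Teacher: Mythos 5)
Your proof is correct, and the converse direction coincides with the paper's: define $f'(x) \df \sum\set{-f(y): 0 \lneq y \leq x}$, verify additivity by splitting $y = y\cdot x + y\cdot z$, and get minimality from the observation that any dual annihilator is an upper bound of $S_x$. The forward direction follows the same overall strategy as the paper --- manufacture a competing dual annihilator that is capped on $\da x_0$ and invoke minimality of $f^\bot$ --- but your execution is genuinely different. The paper argues by contradiction: it passes to the completion $\cB$, assumes $\sum_{\cB}\set{-f(y): 0 \lneq y \leq x} \lneq f^\bot(x)$, chooses an intermediate element $a_x \in B$ with $\sum_{\cB}\set{-f(y): 0 \lneq y \leq x} \leq a_x \lneq f^\bot(x)$ (a step that silently uses density of $B$ in $\cB$), and caps with the piecewise-constant operator $f'(z) = a_x$ for $0 \neq z \leq x$ and $f'(z) = 1$ otherwise. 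You instead take an arbitrary upper bound $u \in B$ of $S_{x_0}$ and cap with $g(z) \df f^\bot(z)\cdot u + f^\bot(z\cdot(-x_0))$, obtaining $f^\bot(x_0) \leq g(x_0) \leq u$ directly. This buys you two things: the argument never leaves $B$ (no completion, no density argument to locate $a_x$), and additivity of the capping operator is automatic since both summands are visibly normal and additive, whereas the paper's $f'$ requires a small case analysis that is left to the reader. Both routes prove the same lemma; yours is constructive rather than by contradiction, and your case split at $z\cdot(-x_0)=0$ versus $z\cdot(-x_0)\neq 0$ correctly covers the verification that $g$ dominates $-f$ on $B^+$.
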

\begin{proof}
Let $\overline{B}$ be the completion of $B$. If $M \subseteq B$, then $\sum_{\cB} M$ denotes the supremum of $M$ in $\cB$. Since $B$ is a dense subalgebra of $\cB$, all sums existing in $B$ coincide with the sums in $\cB$; in particular, if $\sum_{\cB} M = b \in B$, then $\sum_{\cB} M = \sum_B M$.

\aright Suppose that $f^\bot$ is a dual pseudocomplement of $f$, and let $x \in B^+$, $0 \lneq y \leq x$. Then, $y \leq x$ implies $f^\bot(y) \leq f^\bot(x)$.  Since $f(y) + f^\bot(y) = 1$, we have $-f(y) \leq f^\bot(y)$, and thus,  $-f(y) \leq f^\bot(x)$. It follows that $f^\bot(x)$ is an upper bound of $\set{-f(y): 0 \lneq y \leq x}$.

 Assume that $\sum_{\cB} \set{-f(y): 0 \lneq y \leq x} \lneq f^\bot(x)$ for some $x \in B^+$. Choose some $a_x \in B$ such that
 \begin{gather*}
 \sum\nolimits_{\cB} \set{-f(y): 0 \lneq y \leq x} \leq a_x \lneq f^\bot(x),
 \end{gather*}
 and define $f': B \to B$ by $f'(0) \df 0$, and
\begin{gather*}
f'(z) \df
\begin{cases}
 1, &\text{if } z \not\leq x, \\
a_x, & \text{if } 0 \neq z \leq x.
\end{cases}
\end{gather*}
Then, $f' \in \mb $, and $f(z) + f'(z) = 1$, if $z \not\leq x$. If $0 \neq z \leq x$, then
\begin{xalignat*}{2}
f(z) + f'(z) &= f(z) + a_x, \\
&\geq f(z) + \sum\nolimits_{\cB} \set{-f(y): 0 \lneq y \leq x}, \\
&\geq f(z) + - f(z), && \text{since } 0 \neq z \leq x, \\
&= 1,
\end{xalignat*}

Since $f^\bot$ is the dual pseudocomplement of $f$, we have $f^\bot \leq f'$, in particular, $f^\bot(x) \leq f'(x) = a_x$. This contradicts $a_x \lneq f^\bot(x)$. Therefore, $\sum\nolimits_{B} \set{-f(y): 0 \lneq y \leq x}$ exists and is equal to $f^\bot(x)$ for each $x \in B^+$.

\aleft Set $f'(0) \df 0$, and $f'(x) \df \sum\set{-f(y): 0 \lneq y \leq x}$. Let $x,z \in B^+$.  If $x \leq x'$, then $\set{-f(y): 0 \lneq y \leq x} \subseteq \set{-f(y): 0 \lneq y \leq x'}$, and thus, $f'(x) \leq f'(x'$. It follows that $f'$ is isotone, and $f'(x) + f'(z) \leq f'(x+z)$.

Conversely,
\begin{align*}
f'(x+z) &= \sum\set{-f(y): 0 \lneq y \leq x+z}, \\
&=\sum\set{-f(y \cdot x + y \cdot z): y \leq x + z}, \\
&=\sum\set{-(f(y \cdot x) + f(y \cdot z)): y \leq x + z}, \\
&\leq \sum\set{-f(y): 0 \lneq y \leq x} + \sum\set{-f(y): 0 \lneq y \leq z}, \\
&= f'(x) + f'(z).
\end{align*}
Thus, $f' \in \mb $. Furthermore,
\begin{gather}\label{d1}
f(x) + f'(x) = f(x) + \sum\set{-f(y): 0 \lneq y \leq x} \geq f(x) + -f(x) = 1,
\end{gather}
and therefore, $f \lor f' = f^\1$.

Next, suppose that $f \lor g = f^\1$ for some $g \in \mb $. Let  $x \neq 0$ and assume that $f'(x) \not\leq g(x)$, i.e. $f'(x) \cdot - g(x) \neq 0$. Then,
\begin{gather*}
0 \neq  \underbrace{\sum\set{-f(y): 0 \lneq y \leq x}}_{f'(x)} \cdot - g(x) = \sum\set{-f(y) \cdot -g(x): 0 \lneq y \leq x}.
\end{gather*}
Thus, there is some $0 \lneq y \leq x$ such that $-f(y) \cdot -g(x) \neq 0$, i.e. $f(x) + g(x) \neq 1$. This  contradicts our hypothesis $f \lor g = f^\1$, and it follows that $f^\bot$ is the dual pseudocomplement of $f$.
\pfend\end{proof}

\begin{theorem}\label{thmNOdpc}
$\mb $ is dually pseudocomplemented \tiff $B$ is complete.
\end{theorem}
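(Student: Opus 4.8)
The plan is to prove both implications through Lemma~\ref{thmNOprep}, which reduces the existence of a dual pseudocomplement of $f$ to the existence in $B$ of all the sums $\sum\set{-f(y): 0 \lneq y \leq x}$, $x \in B^+$.

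For the direction ``$\Leftarrow$'', suppose $B$ is complete. Then for every $f \in \mb$ and every $x \in B^+$ the sum $\sum\set{-f(y): 0 \lneq y \leq x}$ exists in $B$, simply because arbitrary suprema exist. By the converse part of Lemma~\ref{thmNOprep} this sum is the value $f^\bot(x)$ of a dual pseudocomplement of $f$, so every $f \in \mb$ has a dual pseudocomplement and $\mb$ is dually pseudocomplemented.

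For ``$\Implies$'' I would argue contrapositively: assuming $B$ is not complete, I would exhibit one modal operator with no dual pseudocomplement. First I would use the well-known fact \cite{kop89} that a Boolean algebra is complete \tiff every family of pairwise disjoint elements has a supremum; incompleteness thus yields a pairwise disjoint family $D = \set{d_i : i \in I} \subseteq B^+$ with no supremum in $B$ (disjointness being arranged, if necessary, by the usual disjointification of an arbitrary sup-less set). The heart of the argument is to define $f: B \to B$ by $f(0) \df 0$, by $f(a) \df -d_i$ when $0 \lneq a \leq d_i$ for the necessarily unique such $i$, and by $f(a) \df 1$ otherwise, and to verify $f \in \mb$. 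Granting this, the payoff is immediate: since $-f(a) = d_i$ when $0 \lneq a \leq d_i$ and $-f(a) = 0$ otherwise, the set $\set{-f(y): 0 \lneq y \leq 1}$ equals $D \cup \set{0}$, whence $\sum\set{-f(y): 0 \lneq y \leq 1} = \sum D$. As $\sum D$ does not exist in $B$, Lemma~\ref{thmNOprep} shows that $f$ has no dual pseudocomplement, so $\mb$ is not dually pseudocomplemented.

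The main obstacle I anticipate is the additivity of $f$: one cannot read off $\sum D$ directly as a value of a modal operator, since that supremum does not exist, so the definition must avoid ever forming it. This is exactly where pairwise disjointness of $D$ is essential, as it collapses the relevant sums to $0$ or $1$: when $a \leq d_i$ and $b \leq d_j$ with $i \neq j$ one gets $f(a) + f(b) = -d_i + -d_j = -(d_i \cdot d_j) = 1$, matching $f(a+b) = 1$, and the remaining cases (arguments below a common $d_i$, below distinct $d_i$, or below none) are checked the same way. The reduction to a disjoint family is thus what makes this short case analysis, and hence a genuine additive $f$, possible at all.
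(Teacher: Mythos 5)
Your proof is correct, and while your ``$\Leftarrow$'' direction coincides with the paper's (both simply feed the completeness of $B$ into Lemma~\ref{thmNOprep}), your ``$\Rightarrow$'' direction takes a genuinely different route. The paper argues directly: given $M \subseteq B^+$, it replaces $M$ by the filter it generates, takes $f$ to be the identity, and reads $-\prod M$ off the sum formula of Lemma~\ref{thmNOprep} evaluated at $1$. You argue contrapositively: you replace a sup-less set by a sup-less pairwise disjoint family $D$ and hand-craft an operator $f$ with $\set{-f(y): 0 \lneq y \leq 1} = D \cup \set{0}$, so that the non-existent $\sum D$ is exactly the sum Lemma~\ref{thmNOprep} requires. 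Your route costs a new operator for each antichain, but it buys a completely transparent additivity check --- as you observe, disjointness is precisely what collapses $f(a)+f(b)$ to $1$ in the mixed cases --- and it exhibits concretely which operator witnesses the failure, in the same spirit as Examples~\ref{ex:FC} and~\ref{ex:free}. It is also more careful at the one point where the paper is terse: the paper's displayed computation identifies $\sum\set{-y : 0 \lneq y}$, a sum taken over all of $B^+$, with $-\prod M$, and that identification needs more justification than is given, whereas your reduction to a disjoint family makes the relevant sum unambiguous. The only step you should spell out is the disjointification itself (a maximal disjoint subset of the downset generated by a sup-less $M$ has the same upper bounds as $M$), but this is the standard argument from \cite{kop89} and is routine.
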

\begin{proof}
\aright Suppose that $\mb $ is dually pseudocomplemented, and let $M \subseteq B^+$. Let $F$ be the filter of $B$ generated by $M$. Then, $\prod_{\cB} F = \prod_{\cB} M$: Since $M \subseteq F$, any lower bound of $F$ is a lower bound of $M$. Conversely, let $x$ be a lower bound of $M$ and $y \in F$. Then, there are $q_0, \ldots, q_n \in M$ such that $q_0 \cdot \ldots \cdot q_n \leq y$, and therefore, $x \leq y$. Thus, we may assume that $M$ is a filter of $B$. Let $f$ be the identity. Then, by Lemma \ref{thmNOprep},
\begin{gather*}
f^\bot(1) = \sum\nolimits_{B}\set{-f(y): 0 \lneq y} =  \sum\nolimits_{B}\set{-y: 0 \lneq y} = - \prod\nolimits_B M.
\end{gather*}
Hence, $-\prod_B M$ exists and thus, $B$ is complete.

\aleft If $B$ is complete, then $\sum\set{-f(y): 0 \lneq y \leq x}$ exists for each $x \in B^+$, and the mapping defined by
\begin{gather*}
f^\bot(x) \df
\begin{cases}
0, &\text{if } x = 0, \\
\sum\set{-f(y): 0 \lneq y \leq x}, &\text{if } x \neq 0
\end{cases}
\end{gather*}
is the dual pseudocomplement of $f$ by Lemma \ref{thmNOprep}.
\pfend\end{proof}
In particular, $\mb $ is dually pseudocomplemented, if $B$ is finite.

Let $M_c(B)$ be the set of completely additive normal operators of $B$. It is obvious that $M_c$ is a sub--semilattice of $\mb $. If $B$ is complete, we can say more:
\begin{theorem}
Suppose that $B$ is complete.
\begin{enumerate}
\item\label{ps1} $M_c(B)$ is complete.
\item\label{ps3} $g = f^\bot$ for some $f \in \mb $ \tiff $g$ is completely additive.
\item\label{ps4} $M_c(B)$ is a Boolean algebra, and $f^{\bot\bot}$ is the largest element of $M_c(B)$ below $f$ for every $f \in \mb $.
\end{enumerate}
\end{theorem}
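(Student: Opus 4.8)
The plan for (\ref{ps1}) is to show that $M_c(B)$ is closed under arbitrary joins taken in $\mb$, which is a complete lattice by Theorem \ref{thmNOcompsl} since $B$ is complete. Recall that for $\z \neq Q \subseteq \mb$ the join is computed pointwise, $(\bigvee Q)(a) = \sum\set{g(a): g \in Q}$. To see $\bigvee Q$ is completely additive when each $g \in Q$ is, let $M \subseteq B$ with $s \df \sum M$; using complete additivity of each $g$ and the fact that suprema in the complete algebra $B$ may be rearranged, $(\bigvee Q)(s) = \sum_{g\in Q} g(s) = \sum_{g\in Q}\sum_{x\in M} g(x) = \sum_{x\in M}\sum_{g\in Q} g(x) = \sum_{x\in M}(\bigvee Q)(x)$. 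Hence $\bigvee Q \in M_c(B)$. Since $f^\0$ and (routinely) $f^\1$ are completely additive, $M_c(B)$ contains both bounds of $\mb$, and closure under arbitrary joins then makes it a complete lattice.

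For the direction $\Implies$ of (\ref{ps3}), let $g = f^\bot$; by Lemma \ref{thmNOprep} we have $g(x) = \sum\set{-f(y): 0 \lneq y \leq x}$ for $x \in B^+$. Being a modal operator, $g$ is monotone, so for $M \subseteq B$ with $s \df \sum M$ one gets $\sum_{x\in M} g(x) \leq g(s)$ at once; the content is the reverse inequality, for which it suffices to show $-f(y) \leq T \df \sum_{x\in M} g(x)$ for each $0 \lneq y \leq s$. Put $M_y \df \set{x \in M : x\cdot y \neq 0}$. By the (infinite) meet--distributive law, $y = y\cdot s = \sum\set{x\cdot y : x \in M}$, so $M_y \neq \z$. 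For $x \in M_y$ we have $0 \lneq x\cdot y \leq x$, whence $-f(x\cdot y) \leq g(x) \leq T$; taking the supremum over $x \in M_y$ and complementing gives $-T \leq \prod\set{f(x\cdot y): x \in M_y} \leq f(y)$, the last step by monotonicity of $f$. Thus $-f(y) \leq T$, so $g$ is completely additive.

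The converse direction of (\ref{ps3}) is the crux, and I expect it to be the main obstacle. The plan is to prove that a completely additive $g$ satisfies $g = g^{\bot\bot}$, so that $g = (g^\bot)^\bot$ is open. Unwinding Lemma \ref{thmNOprep} twice and applying De Morgan's laws in $\cB$ yields $g^{\bot\bot}(x) = \sum\set{\prod\set{g(w): 0\lneq w\leq y}: 0\lneq y\leq x}$. The inequality $g^{\bot\bot} \leq g$ is formal, since each inner product lies below $g(y) \leq g(x)$, so everything reduces to $g(x) \leq g^{\bot\bot}(x)$, and this is exactly where complete additivity must be invoked. In the atomic situation the reconstruction is transparent: complete additivity gives $g(x) = \sum\set{g(a): a \in At(B),\ a \leq x}$, and for an atom $a$ the inner product collapses to $g(a)$ (as $a$ is the only $w$ with $0\lneq w \leq a$), so $g(x) = \sum\set{g(a): a\leq x} \leq g^{\bot\bot}(x)$. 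The hard part will be carrying this reconstruction of $g(x)$ from the infima $\prod\set{g(w): 0\lneq w\leq y}$ through the general complete case, where atoms may be absent; this is where the strength of the completeness hypothesis, rather than mere existence of the relevant suprema, has to be used.

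Finally, (\ref{ps4}) is assembled from (\ref{ps3}) together with Frink's theorem. By (\ref{ps3}), $M_c(B)$ is precisely the set $O(\mb)$ of open elements of $\mb$, so Lemma \ref{lemNOoba} immediately gives that $M_c(B)$ is a Boolean algebra with meet $g \land h = (g^\bot \lor h^\bot)^\bot$. For the second assertion, fix $f \in \mb$: one has $f^{\bot\bot} \leq f$ by the standard dual--pseudocomplement inequality, and $f^{\bot\bot} = (f^\bot)^\bot$ is open, hence completely additive by (\ref{ps3}), so $f^{\bot\bot} \in M_c(B)$ lies below $f$. If $g \in M_c(B)$ with $g \leq f$, then $g$ is open, so $g = g^{\bot\bot}$; antitonicity of ${}^\bot$ gives $f^\bot \leq g^\bot$ and hence $g = g^{\bot\bot} \leq f^{\bot\bot}$. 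Thus $f^{\bot\bot}$ is the largest element of $M_c(B)$ below $f$. Since the whole of (\ref{ps4}) rests on the equality $M_c(B) = O(\mb)$ established in (\ref{ps3}), the obstacle flagged there is the only genuine difficulty in the theorem.
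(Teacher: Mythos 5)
Your parts (\ref{ps1}), the forward half of (\ref{ps3}), and the derivation of (\ref{ps4}) from (\ref{ps3}) are correct, and your argument that $f^\bot$ is completely additive (reducing everything to $-f(y)\leq T$ via the sets $M_y$) is cleaner than the paper's computation with sets of upper bounds. The problem is exactly the step you flagged and left open: the claim that every completely additive $g$ satisfies $g=g^{\bot\bot}$. This is not a routine extension of your atomic argument --- it is false when $B$ is atomless, so the gap cannot be closed. Take $g=\mathrm{id}_B$ on a complete atomless $B$; $g$ is trivially completely additive. By Lemma \ref{thmNOprep}, $g^\bot(x)=\sum\set{-y: 0\lneq y\leq x}$, and since every $x\in B^+$ splits as $x=y_1\ds y_2$ with $-y_1+-y_2=1$, we get $g^\bot=f^\1$; hence $g^{\bot\bot}=(f^\1)^\bot=f^\0\neq g$. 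Since every open element $h=f^\bot$ satisfies $h^{\bot\bot}=f^{\bot\bot\bot}=f^\bot=h$, the identity is completely additive but not open, so the $\Leftarrow$ half of (\ref{ps3}) fails, and with it the second claim of (\ref{ps4}): here $f^{\bot\bot}=f^\0$ is certainly not the largest completely additive operator below $\mathrm{id}_B$.

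What survives is precisely the atomic case: if $B$ is complete and atomic, your reconstruction $g(x)=\sum\set{g(a): a\in At(B),\ a\leq x}$ together with the collapse of $\prod\set{g(w): 0\lneq w\leq a}$ to $g(a)$ for an atom $a$ does prove $g\leq g^{\bot\bot}$, and the whole theorem then goes through. Be aware that the paper's own proof of this direction does not close the gap either: it rests on the identification $g^{\bot\bot}(x)=g^\bot(g^\bot(x))$, i.e.\ it computes the double dual pseudocomplement as the two--fold composition of the function $g^\bot$, which is not what $(g^\bot)^\bot$ means and is not valid in general (the subsequent contradiction step, deducing $g(x)\cdot -g(x)\neq 0$, also does not follow). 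So your instinct that this is the one genuine difficulty in the theorem is right; the correct resolution is to add atomicity as a hypothesis (equivalently, to work in the complex--algebra setting of Theorem \ref{thmNOframealg}), not to look for a cleverer use of completeness alone.
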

\begin{proof}
\ref{ps1}. Let $f,g \in M_c(B)$ and $\z \neq M \subseteq B$; since $B$ is complete, $\sum M$ exists. We first show that $f \lor g$ is complete, i.e. that $(f \lor g)(\sum M) = \sum\set{(f \lor g)(a): a \in M}$. Consider
\begin{xalignat*}{2}
(f \lor g)\left(\sum M \right) &= f\left(\sum M\right) + g\left(\sum M\right), &\text{by definition of $\lor$}, \\
&= \sum\set{f(a): a \in M} + \sum\set{g(a): a \in M},  &\text{$f$ and $g$ are complete}, \\
&= \sum\set{f(a) + g(a): a \in M}, &\text{infinite distrib. of $+$}, \\
&= \sum\set{(f \lor g)(a): a \in M}, &\text{by definition of $\lor$}.
\end{xalignat*}
It is straightforward to extend this over infinite joins of complete modal operators.

\ref{ps3}. \aright $f^\bot$ is completely additive: Suppose that $\z \neq M \subseteq B^+$, and $\sum M$ exists. The aim is to show that $\sum\set{f^\bot(m): m \in M}$ also exists and is equal to $f^\bot(\sum M)$. In the proof we shall use the de Morgan rules for infinite sums, see e.g. \cite[Lemma 1.33]{kop89}. Let $M = \set{m_i: i \in I}$. Now, $z \in \ub(\set{f^\bot(m_i): i \in I})$
\begin{xalignat*}{2}
& z \in \ub(\set{f^\bot(m_i): i \in I}) \\
 &\Iff z \in \ub( \set{\sum\set{-f(y): 0 \lneq y \leq m_i}: i \in I}), && \text{by Lemma \ref{thmNOprep}}, \\
&\Iff z \in \ub(\set{-f(y): 0 \lneq y \leq m_i, i \in I}), &&\text{by transitivity of } \geq, \\
&\Iff z \in \ub(\set{-f(y): 0 \lneq y, \ y = y \cdot m_i, i \in I}), \\
&\Iff z \in \ub(\set{-f(t): 0 \lneq t \leq m_i}), &&\text{substituting $t$ for $y \cdot m_i$}, \\
&\Iff z \in \ub(\set{-f(t): 0 \lneq t \leq \sum M}),&& \text{$t \leq m_i$ implies $-f(m_i) \leq -f(t)$}\\
&\Iff z \in \ub f^\bot(\sum M).
\end{xalignat*}
Since $f^\bot(\sum M)$ exists, this implies that $\sum\set{f^\bot(m_i): i \in I}$ exists and is equal to $f^\bot(\sum M)$, and therefore, $f^\bot$ is complete.

\aleft Conversely, let $g \in M_c(B)$; we need to find some $f \in \mb $ such that $f^\bot = g$. The obvious candidate for $f$ is $g^\bot$ which exists, since $B$ is complete.

Since $g \lor g^\bot = f^\1$, we have $f^\bot = g^{\bot\bot} \leq g$, since $g^{\bot\bot}$ is the dual pseudocomplement of $g^\bot$. For $\geq$, first observe that
\begin{xalignat*}{2}
f^\bot(x) &= g^{\bot\bot}(x) = g^\bot(g^\bot(x)), \\
&= g^\bot(\sum\set{-g(y): 0 \lneq y \leq x}), &\text{by definition of $g^\bot$},\\
&= \sum\set{g^\bot(-g(y)): 0 \lneq y \leq x}, &\text{ since $g^\bot$ is compl. additive}, \\
&=  \sum\set{\sum\set{-g(z): 0\lneq z \leq -g(y)}: 0 \lneq y \leq x} &\text{by definition of $g^\bot$},\\
&= \sum\set{-g(z): 0\lneq z \leq -g(y) \tand 0 \lneq y \leq x}, &\text{by transitivity of $\geq$}.
\end{xalignat*}
Thus,
\begin{xalignat}{2}
&g(x) \leq f^\bot(x) \Iff g(x)\leq \sum\set{-g(z): 0\lneq z \leq -g(y) \tand 0 \lneq y \leq x}.
\end{xalignat}
Assume that $g(x) \not\leq f^\bot(x)$. Then, $g(x) \cdot -f^\bot(x) \neq 0$, i.e.
\begin{gather}\label{bb}
g(x) \cdot \prod\set{g(z): 0\lneq z \leq -g(y) \tand 0 \lneq y \leq x} \neq 0.
\end{gather}
If $g(x) = 1$, then $f^\bot(x) = g^{\bot\bot}(x) = 1$, and $g(x) \leq f^\bot(1)$, contradicting our assumption. If $g(x) \neq 1$, then $-g(x) \neq 0$, and therefore, \eqref{bb} implies that $g(x) \cdot -g(x) \neq 0$ by \eqref{bb}, a contradiction.

\ref{ps4}:  Since the open elements of $\mb $ are exactly the completely additive ones by \ref{ps3}., $M_c(B)$ is a Boolean algebra by Lemma \ref{lemNOoba}. The join $\lor_c$ in $M_c(B)$ coincides with $\lor$, and the meet $f \land_c g$ is given by $(f^\bot \lor g^\bot)^\bot$. Furthermore the assignment $f \mapsto f^{\bot\bot}$ is an interior operator by Theorem 2 of \cite{frink62}. This implies the second claim.
\pfend\end{proof}
The following observation comes as no surprise:
\begin{theorem}\label{ex:framealg}
Let $\X = \klam{X,R}$ be a frame; then, $\poss{R}^\bot = \poss{-R}$.
\end{theorem}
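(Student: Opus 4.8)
The plan is to reduce everything to the explicit formula for the dual pseudocomplement supplied by Lemma \ref{thmNOprep}. The underlying Boolean algebra of $\Cm(\X)$ is the full power set $2^X$, which is complete, so Theorem \ref{thmNOdpc} guarantees that $\poss{R}^\bot$ exists, and Lemma \ref{thmNOprep} gives, for every nonempty $Y \subseteq X$,
\[
\poss{R}^\bot(Y) = \sum\set{-\poss{R}(Z): \z \neq Z \subseteq Y},
\]
where $\sum$ is union and $-$ is complement in $2^X$. (For $Y = \z$ both sides are $\z$, since the operators involved are normal.) Thus the whole statement collapses to evaluating this union.

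First I would unwind a single complemented term: from $\poss{R}(Z) = \set{x: R(x) \cap Z \neq \z}$ one reads off $-\poss{R}(Z) = \set{x: R(x) \cap Z = \z}$. Hence $x$ lies in $\poss{R}^\bot(Y)$ exactly when there is some nonempty $Z \subseteq Y$ with $R(x) \cap Z = \z$. The next step is to observe that such a $Z$ exists if and only if $Y \not\subseteq R(x)$: the forward implication is immediate, and conversely, if some $y \in Y$ satisfies $y \notin R(x)$, then the singleton $Z = \set{y}$ works. Consequently $x \in \poss{R}^\bot(Y)$ iff there is some $y \in Y$ with $\neg(x R y)$.

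It then remains to match this with $\poss{-R}$. By definition $\poss{-R}(Y) = \set{x: (-R)(x) \cap Y \neq \z}$, and since $(-R)(x) = \set{y: \neg(x R y)}$, membership of $x$ in $\poss{-R}(Y)$ says precisely that there is some $y \in Y$ with $\neg(x R y)$ --- the very condition obtained in the previous step. Therefore $\poss{R}^\bot = \poss{-R}$. As a sanity check one can also verify directly that $\poss{R} \lor \poss{-R} = f^\1$, which confirms that $\poss{-R}$ is at least a dual annihilator; the formula above is what additionally pins it down as the \emph{least} one.

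I do not expect a genuine obstacle: the result is essentially a bookkeeping computation, made routine by the explicit formula of Lemma \ref{thmNOprep}. The one point that needs a little care is the passage from an arbitrary nonempty witnessing set $Z$ to a singleton, which is exactly what makes the union over all $\z \neq Z \subseteq Y$ reduce to the pointwise condition $Y \not\subseteq R(x)$; because $2^X$ is complete, these infinite unions present no difficulty.
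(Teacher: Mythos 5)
Your proof is correct. It rests on the same foundation as the paper's --- the explicit formula $f^\bot(x)=\sum\set{-f(y):0\lneq y\leq x}$ from Lemma \ref{thmNOprep} --- but the route from there differs in a way worth noting. The paper first argues that both $\poss{R}^\bot$ and $\poss{-R}$ are completely additive (the former via part (2) of the theorem on $M_c(B)$, the latter via Theorem \ref{thmNOframealg}), hence determined by their values on singletons, and then applies Lemma \ref{thmNOprep} only to $\set{y}$, where the sum degenerates to the single term $-\poss{R}(\set{y})$. You instead evaluate the union $\bigcup\set{-\poss{R}(Z):\z\neq Z\subseteq Y}$ for arbitrary $Y$ directly, and your observation that a witnessing $Z$ exists iff a singleton witness $\set{y}$ exists is the elementary counterpart of the paper's reduction to singletons. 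The payoff of your version is that it is self-contained at the level of set manipulation in $2^X$ and does not need the (nontrivially proved) fact that dual pseudocomplements over a complete algebra are completely additive; the payoff of the paper's version is brevity and that it exhibits the result as an instance of the general principle that completely additive operators agree once they agree on atoms. Your handling of the edge case $Y=\z$ (needed because Lemma \ref{thmNOprep} speaks only of $x\in B^+$) is a small point the paper glosses over, and you treat it correctly via normality.
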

\begin{proof}
Since both $\poss{R}^\bot$ and $\poss{-R}$ are completely additive, and thus determined by their action on the singletons, it suffices to show that $\poss{R}^\bot(\set{y}) = \poss{-R}(\set{y})$ for every $y \in X$. Let $y \in X$. Then, by Lemma \ref{thmNOprep} and a simple computation,
\begin{xalignat*}{2}
&\poss{R}^\bot(\set{y}) = -\poss{R}(\set{y}) = \poss{-R}(\set{y}).
\end{xalignat*}
This proves the claim.
\pfend\end{proof}

Let \wlg $B \leq B^\sigma$, $f \lor g = f^\1$, and suppose that $b \in B$. Even though $f^{+\bot}(b)$ need not be equal to $g^+(b)$ (and not even be in $B$), it seems reasonable to ask whether $f^{+\bot}(b) \leq g^+(b)$ in $B^\sigma$.

\begin{lemma}\label{lemNOfg}
Let $f,g \in \mb $ such that $f + g = f^\1$, and  suppose that $F,G \in \ult(B)$. Then, $f[G] \not\subseteq F $ implies $g[G] \subseteq F$.
\end{lemma}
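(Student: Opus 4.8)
The plan is to read both the hypothesis and the conclusion through the defining equivalence \eqref{canrel} for the canonical relation. The hypothesis $f[G]\not\subseteq F$ says precisely that some $a\in G$ satisfies $f(a)\notin F$, and since $F$ is an ultrafilter this is equivalent to $-f(a)\in F$. The conclusion $g[G]\subseteq F$ amounts to showing $g(b)\in F$ for \emph{every} $b\in G$. So first I would fix a witness $a\in G$ with $-f(a)\in F$, then fix an arbitrary $b\in G$, thereby reducing the whole lemma to the single membership $g(b)\in F$.

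The key device is to combine the fixed witness $a$ with the arbitrary element $b$ by passing to their meet. Since $G$ is a filter, $c\df a\cdot b\in G$, and since $G$ is proper we have $c\neq 0$; moreover $c\leq a$ and $c\leq b$. Now I would invoke the companion condition at $c$: because $c\neq 0$, the hypothesis $f+g=f^\1$ gives $f(c)+g(c)=f^\1(c)=1$. Every modal operator is isotone (additivity yields $x\leq y \Implies f(y)=f(x+y)=f(x)+f(y)\geq f(x)$), so $f(c)\leq f(a)$; together with $-f(a)\in F$ and the upward closure of $F$ this yields $-f(c)\in F$, that is, $f(c)\notin F$.

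Finally I would exploit that $F$ is prime: from $f(c)+g(c)=1\in F$ together with $f(c)\notin F$ it follows that $g(c)\in F$. One more application of isotonicity, this time to $g$ using $c\leq b$, gives $g(c)\leq g(b)$, so $g(b)\in F$ by upward closure of $F$. As $b\in G$ was arbitrary, $g[G]\subseteq F$, which is exactly the claim.

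I do not expect a genuine obstacle here. The one point that must be handled with care is ensuring the discriminator returns $1$ rather than $0$ when the companion condition is applied; this is precisely why I pass to $c=a\cdot b$ and use that ultrafilters are proper, so that $c\neq 0$. Everything else is bookkeeping with isotonicity and with the prime and upward-closed properties of the ultrafilter $F$.
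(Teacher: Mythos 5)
Your proof is correct and takes essentially the same route as the paper's: both pass to the meet $a \cdot b \in G$, apply the decomposing condition at that nonzero element, and conclude via isotonicity together with the primeness of the ultrafilter $F$. The only difference is that the paper frames the argument as a proof by contradiction while you argue directly, which is immaterial.
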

\begin{proof}
Let $b \in G$ such that $f(b) \not\in F$ and assume that there is some $c \in G$ such that $g(c) \not\in F$. Since $f(b) \not\in F$ we have $f(b \cdot c) \not\in F$, and since $g(c) \not\in F$ we have $g(b \cdot c) \not \in F$.  Thus, $f + g = f^\1$ implies $f(b \cdot c) \in F$, a contradiction.
\pfend\end{proof}

\begin{corollary}
Suppose that $B \leq B^\sigma$ and $f \lor g = f^\1$. Then, $f^{+\bot}(b) \leq g^+(b)$ for all $b \in B$.
\end{corollary}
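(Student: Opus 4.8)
The plan is to compute $f^{+\bot}$ explicitly in the frame underlying $B^\sigma$ and then read off the inclusion from Lemma~\ref{lemNOfg}. Recall that $B^\sigma$ has base $2^{\ult(B)}$ and that $f^+ = \poss{R_f}$ and $g^+ = \poss{R_g}$, where $R_f, R_g$ are the canonical relations of $f$ and $g$ on the common Stone space $\ult(B)$. By Theorem~\ref{thmNOframealg} the algebra $B^\sigma$ is complete (indeed complete atomic), so $f^{+\bot}$ exists by Theorem~\ref{thmNOdpc}. Since $f^+ = \poss{R_f}$ is precisely the diamond operator of the frame $\klam{\ult(B), R_f}$, Theorem~\ref{ex:framealg} applies with $X = \ult(B)$ and $R = R_f$, yielding $f^{+\bot} = \poss{R_f}^\bot = \poss{-R_f}$.

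The next step is to unfold membership under the Stone embedding. Identifying $b \in B$ with its image $h(b) = \set{G \in \ult(B): b \in G}$, an ultrafilter $F$ satisfies $F \in f^{+\bot}(h(b)) = \poss{-R_f}(h(b))$ exactly when there is some $G$ with $b \in G$ and $(-R_f)$ relating $F$ to $G$, i.e. when $F R_f G$ fails. By the definition \eqref{canrel} of the canonical relation, this last condition reads $f[G] \not\subseteq F$.

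Now I would invoke Lemma~\ref{lemNOfg}. The hypothesis $f \lor g = f^\1$ is the same as $f + g = f^\1$, so from $f[G] \not\subseteq F$ the lemma gives $g[G] \subseteq F$, i.e. $F R_g G$. Since $b \in G$, this places $F$ in $\poss{R_g}(h(b)) = g^+(h(b))$. Hence $f^{+\bot}(b) \leq g^+(b)$ in $B^\sigma$, as claimed.

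The argument is essentially a direct unfolding, so I expect no genuine obstacle; the only point needing care is the bookkeeping of the two canonical relations. One must keep in mind that $f^+ = \poss{R_f}$ and $g^+ = \poss{R_g}$ live on the common base $2^{\ult(B)}$, that $R_f, R_g$ are the canonical relations of the \emph{original} operators on $B$, and that Lemma~\ref{lemNOfg} is applied to $F, G \in \ult(B)$ with those same $f$ and $g$. It is worth noting in passing that the computation in fact establishes the stronger inclusion $f^{+\bot}(Y) \leq g^+(Y)$ for every $Y \subseteq \ult(B)$, not merely for $Y$ of the form $h(b)$.
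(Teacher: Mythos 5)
Your argument is correct and follows essentially the same route as the paper: both identify $f^{+\bot}$ with $\poss{-R_f}$ (via Theorem~\ref{ex:framealg}), unfold membership of an ultrafilter $F$ in $\poss{-R_f}(h(b))$ through the canonical relation, and conclude with Lemma~\ref{lemNOfg}. Your closing remark that the inclusion holds for arbitrary $Y \subseteq \ult(B)$ is also accurate, since the Stone image $h(b)$ plays no special role in the computation.
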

\begin{proof}
Let $Y = h(a)$, $a \not \in \two$. We need to show that $\klam{-R_f}(Y) \subseteq \poss{R_g}(Y)$. Consider
\begin{align*}
\klam{-R_f}(Y) \subseteq \poss{R_g}(Y) &\Iff (\forall F)[(-R_f)(F) \cap Y \neq \z \Implies R_g(F) \cap Y \neq \z], \\
&\Iff (\forall F)[(\exists G)( a \in G \tand f[G] \not\subseteq F)\Implies R_g(F) \cap Y \neq \z], \\
&\Iff (\forall F,G)[a \in G \tand f[G] \not\subseteq F \Implies (\exists G')(a \in G' \tand g[G'] \subseteq F)].
\end{align*}
The claim now follows from Lemma \ref{lemNOfg} setting $G' \df G$.
\pfend\end{proof}


\begin{example}\label{ex:jon2}
The first example (from \cite[p 251]{jon93}) exhibits a modal operator on the non--complete $FC(\omega)$ which has a dual pseudocomplement. Let $B \df FC(\omega)$ and let $f: B \to B$ be defined by $f(M) = \set{n+1: n \in M}$. Observe that $f^n(\omega) = \omega \setminus \set{0, \ldots n-1}$, and therefore, $\klam{B,f}$ does not have a finite subalgebra.

Let $g: B \to B$ be defined by
\begin{gather*}
g(M) =
\begin{cases}
\z, &\text{if } M = \z, \\
\omega \setminus \set{n+1}, &\text{if } M = \set{n}, \\
\omega, &\text{otherwise.}
\end{cases}
\end{gather*}
Clearly, $g$ is a modal operator. Suppose that $\z \neq M \subseteq \omega$. If $M$ is not an atom, then $g(M) = \omega$ by definition. If $M = \set{n}$, then $f(M) \cup g(M) = \set{n+1} \cup (\omega \setminus \set{n+1}) = \omega$. Thus, $f \lor g = f^\1$.

Let $f \lor h = f^\1$; our aim is to show that $g \leq h$. Since $f(\set{n}) \cup h(\set{n})= \set{n+1} \cup h(\set{n}) = \omega$, it follows that $h(\set{n}) = \omega \setminus \set{n+1}$ or $h(\set{n}) = \omega$. Thus, $g(\set{n} \subseteq h(\set{n})$. Let $\card{M} \geq 2$ and $n_0, n_1 \in M$. Then, $h(\set{n_0, n_1}) = h(\set{n_0}) \cup h(\set{n_1}) \supseteq \omega \setminus \set{n_0+1} \cup \omega \setminus \set{n_1+1} = \omega$. Hence, $g$ is the dual pseudocomplement of $f$.
\pfend\end{example}

Since $FC(\omega)$ is not complete,  $M(FC(\omega))$ is not dually pseudocomplemented. It is therefore instructive to give a concrete example of a modal operator on $FC(\omega)$ without dual pseudocomplement.

\begin{example}\label{ex:FC}
Let $B = FC(\omega)$, and define $f: B \to B$ by $f(\z) = \z$, and
\begin{gather*}
f(\set{n}) =
\begin{cases}
\set{0}, &\text{if } n = 0, \\
\omega \setminus \set{0}, &\text{if } n \neq 0  \text{ and $n$ is even}, \\
\omega \setminus \set{n}, &\text{if $n$ is odd,}
\end{cases}
\end{gather*}
and extend $f$ over $FC(\omega)$ by $f(M) = \bigcup\set{f(\set{n}): n \in M}$. Since every cofinite $M \subseteq \omega$ contains a positive even number $n$ and an odd number $m$, we note that
\begin{gather}\label{cof}
f(M) \supseteq f(\set{n}) \cup f(\set{m}) = \omega \setminus \set{0} \cup \omega \setminus \set{m} = \omega.
\end{gather}
Furthermore, if $g \in \mb $ and $n$ is odd, then
\begin{gather}\label{inodd}
f(\set{n}) \cup g(\set{n}) = \omega \timplies n \in g(\set{n}).
\end{gather}

Let $\F^0$ be the set of positive even numbers, and $\F^1$ be the set of odd numbers. For $0 \lneq i$ let $n_i$ be the i-th nonzero even number, $g_i(\z) \df \z$, and
\begin{xalignat}{2}
\label{g1} g_i(M) &\df \omega, &&\text{if $0 \in M$}, \\
\label{g2} g_i(\set{n} &\df \set{0}, &&\text{ if } n \in \F^0, \\
\label{g3} g_i(\set{n} &\df \set{n}, &&\text{ if } n \in \F^1, \\
\label{g5} g_i(M) &\df \omega \setminus \set{n_i}, &&\text{if $M$ is cofinite and }  0 \not\in M.
\end{xalignat}
We extend the $g_i$ additively over finite sets; then $g_i(M)$ is finite, if $M$ is finite.

Note that the $g_i$ only differ in how they handle a cofinite $M$ with $0 \not\in M$. Let $L, M \in B^+$. If, say, $0 \in M$, then $g_i(M) = \omega$, and $g_i(L \cup M) = \omega$, since $0 \in L \cup M$. Now,
\begin{gather*}
g_i(L \cup M) = \omega = g_i(M) \subseteq g_i(L) \cup g_i(M).
\end{gather*}
Thus, suppose that $0 \not\in L \cup M$. If $L$ is finite, then $g_i(L) \subseteq \set{n: n \in L \cap \F^1} \cup \set{0}$ by \eqref{g2} and \eqref{g3}. If both $L$ and $M$ are finite, then $g_i(L \cup M) = g_i(L) \cup g_i(M)$ by the definition of $g_i$.  If $M$ is cofinite, then $L \cup M$ is cofinite, and
\begin{gather*}
g_i(L) \subseteq \set{n: n \in L \cap \F^1} \cup \set{0} \subseteq \omega \setminus \set{n_i} = g_i(M).
\end{gather*}
Thus,
\begin{gather*}
g_i(L \cup M) = \omega\setminus \set{n_i} = g_i(M) = g_i(M) \cup g_i(L).
\end{gather*}
If both $L$ and $M$ are cofinite, then so is $L \cup M$, and $g_i(L) = g_i(M) = g_i(L \cup M) = \omega \setminus \set{n_i}$. Altogether, we have shown that $g \in \mb $.

Next, let $M \in B^+$. If $M$ is cofinite, then $f(M) = \omega$ by \eqref{cof}, and if $0 \in M$, then $g_i(M) = \omega$. Let $M$ be finite and $0 \not\in M$; it is enough to show that $f(\set{n}) \cup g_i(\set{n}) = \omega$ for $n \neq 0$. By definition,
\begin{gather*}
f(\set{n}) \cup g_i(\set{n} =
\begin{cases}
\omega \setminus \set{0} \cup \set{0} = \omega, &\text{if $n$ is even}, \\
\omega \setminus \set{n} \cup \set{n} = \omega, &\text{if $n$ is odd.}
\end{cases}
\end{gather*}
Hence, $f \lor g_i = f^\1$.

Assume that $g$ is a dual pseudocomplement of $f$. Then $g(\omega\setminus\set{0}) \leq g_i(\omega\setminus\set{0}) = \omega \setminus \set{n_i}$ for all $i \in \omega^+$, and thus, $g(\omega\setminus\set{0})$ contains no positive even numbers. However, $\omega\setminus\set{0}$ contains all odd numbers, and thus, $\F^1 \subseteq g(\omega\setminus\set{0})$ by \eqref{inodd}. It follows that $g(\omega \setminus \set{0}) \not\in B$, a contradiction. Thus, $f$ does not have a dual pseudocomplement.

It is instructive to consider the canonical extension $B^\sigma$ of $B$ with $f^+ \df \poss{R_f}$. Then, $f^{+\bot}$ exists since $B^\sigma$ is complete, and it is equal to $\poss{-R_f}$ by Theorem  \ref{ex:framealg}. Let $F_n$ be the principal ultrafilter of $FC(\omega)$ generated by $\set{n}$, and $U$ be the non--principal ultrafilter of cofinite sets; furthermore, let $h: B \to 2^{\ult(B)}$ be the Stone embedding. Then, for $M \in B$,
\begin{gather*}
h(M) =
\begin{cases}
\set{F_n: n \in M}, &\text{if $M$ is finite}, \\
\set{F_n: n \in M} \cup \set{U}, &\text{if $M$ is cofinite}.
\end{cases}
\end{gather*}
By \eqref{RfAt} and the definition of $f$,
\begin{gather*}
F_nR_fF_m \Iff
\begin{cases}
n = 0 \tand  (m = 0 \tor m \text{ odd}) \tor \\
n \text{ even and } m \neq 0, \tor \\
n \text{ odd and } n \neq m.
\end{cases}
\end{gather*}
Furthermore,
\begin{align*}
UR_f F_n &\Iff n \neq 0, \\
F_n R_f U &\Iff n \in \omega, \\
UR_fU. &
\end{align*}
Therefore, keeping in mind that $\poss{R}(\set{F_m}) = \conv{R}(\set{F_m})$ we obtain
\begin{gather*}
\poss{R}(\set{F_m}) =
\begin{cases}
\set{0} \cup \set{F_n: n \text{ odd}}, &\text{if } m = 0, \\
\ult(B) &\text{if } m \in \F^0, \\
\ult(B) \setminus \set{F_m}&\text{if } m \in \F^1.
\end{cases}
\end{gather*}
This gives us
\begin{gather*}
\poss{-R}(\set{F_m}) =
\begin{cases}
 \set{F_n: n \in \F^0}, &\text{if } m = 0, \\
\z &\text{if } m \in \F^0, \\
 \set{F_m}&\text{if } m \in \F^1.
\end{cases}
\end{gather*}
This shows that $\poss{-R_f} \leq g_i$ for all $i \in \omega$. On the other hand, if we define
\begin{gather*}
p(\set{n}) =
\begin{cases}
\set{0}, &\text{if $n$ is even}, \\
\set{n}, &\text{if $n$ is odd},
\end{cases}
\end{gather*}
then $\klam{f,p}$ is a minimal pair, and $p \not\leq g_i$ for all $i \in \omega^+$.
\pfend\end{example}

Our final example in this section exhibits a modal operator on the countable free Boolean algebra without a dual pseudocomplement.

\begin{example}\label{ex:free}
Let $B$ be the interval algebra of the rational unit interval $[0,1)_{\mathbb Q} $; we regard $B$ as a subalgebra of the real unit interval $[0,1)_{\mathbb R}$. It is well known that $B$ is the free Boolean algebra on countably many generators. In particular, $B$ is homogenous, and every nonempty infinite open interval is order isomorphic to every other nonempty infinite open interval.

Let $0 \lneq p \lneq 1$ be irrational, and $h: (0,1)_{\mathbb Q} \to (p, 1)_{\mathbb Q}$ be an order isomorphism.
Suppose that $x \in B^+$ and $[x^0_0, x^1_0) \cup [x^0_1, x^1_1) \cup \ldots \cup [x^0_{t(x)}, x^1_{t(x)})$ be its canonical representation. Now, set $f(0) \df 0$, and, for $x \in B^+$,
\begin{gather*}
f(x) \df [0, h(x^1_{t(x)})).
\end{gather*}
Let $y = [y^0_0, y^1_0) \cup [y^0_1, y^1_1) \cup \ldots \cup [y^0_{t(y)}, y^1_{t(y)})$; then,
\begin{xalignat*}{2}
f(x) + f(y) &= [0, h(x^1_{t(x)})) \cup [0, h(y^1_{t(y)})) \\
&= [0, \max\set{h(x^1_{t(x)}), h(y^1_{t(y)})}, \\
&= [0, h(\max\set{x^1_{t(x)}, y^1_{t(y)}})), &&\text{since $h$ is an order isomorphism},\\
&= [0, h((x+y)^1_{t(x+y)})), &&\text{since }t(x+y) = \max\set{x^1_{t(x)}, y^1_{t(y)}}, \\
&= f(x + y).
\end{xalignat*}
Thus, $f \in \mb $. For each $x \in B^+$ let $M_x \df \set{-f(y): 0 \lneq y \leq x}$ and $M_x^* \df  \set{f(y): 0\lneq y \leq x}$; then, $M_x^* = \set{[0, h(y^1_{t(y)})): 0\lneq y \leq x}$. To abbreviate notation, let $C \df IntAlg([,0,1)_{\mathbb R})$. Considering that $h$ is an order isomorphism, we obtain
\begin{align}
\prod\nolimits_C M_x^* &= \prod\nolimits_C\set{[0, h(y^1_{t(y)})): 0\lneq y \leq x} = [0,p), \\
\intertext{and therefore}
\sum\nolimits_C M_x &= - \prod\nolimits_C\set{f(y): 0\lneq y \leq x} = - \prod\nolimits_C M_x^* =
[p,1). \label{ia2}
\end{align}
Next, let $f \lor f' = f^\1$, and $0 \lneq x \lneq 1$. Since $f'(x)$ is an upper bound of $M_x$ by Lemma   \ref{thmNOprep}, it follows from \eqref{ia2} that
\begin{gather}\label{p0}
[p, 1) \subseteq f'(x).
\end{gather}
For each $s \in (0,p) \cap \mathbb{Q}$, let $f_s(0) \df 0$, and $f_s(x) \df [s,1)$ for all $x \in B^+$. Then, $f_s \in \mb $, and
\begin{gather*}
f(x) \cup f_s(x) =  [0, h(x^1_{t(x)})) \cup [s,1) = [0,1),
\end{gather*}
the latter since $s \lneq p$. If $0 \lneq s \leq s' \lneq p$, then $f_{s'}(x) \leq f_s(x)$, and therefore,
\begin{gather}\label{p1}
\prod\nolimits_C\set{f_s(x): s \in (0,p) \cap \mathbb{Q}} = \prod\nolimits_C\set{[s,1): s \in (0,p) \cap \mathbb{Q}} = [p, 1).
\end{gather}

Assume that $f^\bot$ is a dual pseudocomplement of $f$. Then $[p,1) \subseteq f^\bot(x)$ for each $x \in B^+$ by \eqref{p0}; furthermore, $f^\bot(x) \leq f_s(x)$ for each $s \in (0,p) \cap \mathbb{Q}$, and thus, $f^\bot(x) \subseteq \prod\nolimits_C\set{f_s(x): s \in (0,p) \cap \mathbb{Q}}$, and \eqref{p1} implies that $f^\bot(x) \subseteq [p,1)$. Altogether, we obtain that $f^\bot(x) = [p,1)$, a contradiction.
\pfend\end{example}

As in Example \ref{ex:FC}, we see concretely that the pseudocomplement does not exist, because certain infinite products (or sums) do not exist in $B$ -- this time for an atomless BA. Lemma \ref{thmNOprep} tells us that this is to be expected, since $B$ is not complete. Furthermore, we observe that $f$ is a closure operator on the free countable Boolean algebra.

\section{Decomposing discriminators}

It is often the case that pairs of operators are considered which, taken together, have desirable structural properties. Examples of such pairs are Galois connections or residuated mappings. In \cite{dot_mixed} pairs of operators $\klam{f,g}$ were considered where $f$ is a modal operator, and $g$ is a sufficiency operator, i.e.
\begin{gather*}
g(0) = 1 \tand g(x + y) = g(x) \cdot g(y)
\end{gather*}
for all $x,y \in B$. A \emph{weak mixed algebra} (wMIA) is a structure $\klam{B,f,g}$ such that $f$ is a modal operator, $g$ is a sufficiency operator, and
\begin{gather}\label{wmia}
(\forall x)[x \neq 0 \timplies g(x) \leq f(x)].
\end{gather}
The class of wMIAs is denoted by \wmia. These algebras are intimately connected to algebraic models of the logic \Kt, which was introduced by \citet{gpt87}. It turns out that the discriminator decomposition algebras defined below are another way of describing weak MIAs.

Suppose that $f, g$ are modal operators on $B$, and consider the condition
\begin{gather}\label{fg}
(\forall x)[x \neq 0 \Implies f(x) + g(x) = 1.
\end{gather}
Clearly, $f \lor g$ is the unary discriminator. If a pair $\klam{f,g}$ of modal operators satisfies \eqref{fg} we call it a \emph{decomposing pair}, and $g$ a \emph{companion of $f$}. If $\klam{f,g}$ is a decomposing pair, then so is $\klam{g,f}$ owing to the commutativity of $+$. The set of all decomposing pairs is denoted by $\dec(B)$. For each $f \in \mb $ the pair $\klam{f,f^\1}$ is decomposing. If $f^\bot$ is the dual pseudocomplement of $f$, then $\klam{f,f^\bot}$ is a decomposing pair, and $f^\bot$ is the smallest $g \in \mb $ such that $\klam{f,g}$ is decomposing.

A \emph{discriminator decomposition algebra} (DDA) is a bi--modal algebra $\klam{B,f,g}$ such that $\klam{f,g}$ is a decomposing pair. If both $f$ and $g$ are proper, i.e. not equal to $f^\1$, $\klam{B,f,g}$ is called a \emph{proper DDA}. The class of DDAs is denoted by \dda; by \eqref{fg}, \dda\ is a discriminator class.  The relational counterpart of this situation are frames $\klam{X,R,S}$, where $R \cup S = X$; these are called \emph{generalized models of \Kt} in \cite{gpt87}.

\begin{theorem}\label{thmNOtranslate}
There is a bijective correspondence between the set of decomposing pairs and the set of pairs $\klam{f,g}$ such that $\klam{B,f,g}$ is a weak MIA.
\end{theorem}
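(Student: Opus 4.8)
The plan is to exhibit an explicit bijection built from the complementation operation $\ast$ introduced in Section 3, where $g^\ast(a) \df -g(a)$. The two key facts are that $\ast$ interchanges modal operators and sufficiency operators, and that the decomposition condition \eqref{fg} and the weak--MIA inequality \eqref{wmia} are De~Morgan transforms of one another.

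First I would record that if $g$ is a modal operator, then $g^\ast$ is a sufficiency operator: we have $g^\ast(0) = -g(0) = 1$, and $g^\ast(x+y) = -g(x+y) = -(g(x)+g(y)) = -g(x) \cdot -g(y) = g^\ast(x) \cdot g^\ast(y)$, using additivity of $g$ together with De~Morgan. Running the same computation backwards shows that if $h$ is a sufficiency operator, then $h^\ast$ is a modal operator. Moreover $\ast$ is an involution, since $(g^\ast)^\ast(a) = -(-g(a)) = g(a)$.

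Next I would verify that $\klam{f,g}$ satisfies \eqref{fg} \tiff $\klam{B,f,g^\ast}$ satisfies \eqref{wmia}. This reduces to the elementary Boolean equivalence $a + b = 1 \Iff -b \leq a$: for each $x \neq 0$ we have $f(x) + g(x) = 1$ precisely when $-g(x) \leq f(x)$, that is, when $g^\ast(x) \leq f(x)$, which is exactly the wMIA inequality for the sufficiency operator $g^\ast$.

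Combining these, the assignment $\klam{f,g} \mapsto \klam{f,g^\ast}$ sends a decomposing pair to a weak--MIA pair: its first component $f$ is modal by hypothesis, its second component $g^\ast$ is a sufficiency operator, and \eqref{wmia} holds by the equivalence above. The reverse assignment $\klam{f,h} \mapsto \klam{f,h^\ast}$ sends a weak--MIA pair to a decomposing pair by the symmetric argument, and since $\ast$ is an involution the two assignments are mutually inverse. Hence the correspondence is a bijection. There is no genuine obstacle here; the only point requiring care is to keep the complementation $\ast$ distinct from the dual operation $f \mapsto f^\partial$, and to track which operator in each pair is modal and which is a sufficiency operator.
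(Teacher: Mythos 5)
Your proposal is correct and follows essentially the same route as the paper: both use the complementation $g \mapsto g^\ast$ to pass between decomposing pairs and weak--MIA pairs, reducing \eqref{fg} versus \eqref{wmia} to the Boolean equivalence $f(x) + g(x) = 1 \Iff -g(x) \leq f(x)$. The only cosmetic difference is that you argue bijectivity via $\ast$ being an involution, whereas the paper checks injectivity and surjectivity directly; the content is identical.
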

\begin{proof}
Let $\klam{B,f,g}$ be a weak MIA, and set $f' \df g^\ast$. Then, $f'$ is a modal operator, and, for all $a \neq 0$,
\begin{gather*}
f(a) + f'(a) = f(a) + g^\ast(a) = f(a) + -g(a) = 1,
\end{gather*}
the latter since $g(a) \leq f(a)$. Clearly, the assignment $\klam{f,g} \mapsto \klam{f, g^*}$ is an injective mapping, and all that is left to show is that it is surjective. Thus, let $f,f'$ be modal operators such that $f(a) + f'(a) = 1$ for all $a \neq 0$, and set $g \df f'^\ast$. Clearly, $g$ is a sufficiency operator, and $f(a) + f'(a) = 1$ implies $g(a) = -f'(a) \leq f(a)$ for all $a \neq 0$.
\pfend\end{proof}
Thus, the classes \wmia\ and \dda\ are equipollent in the sense of \citet{tg87}. As in \dda\ we are dealing with just one kind of operator instead of the two kinds in \wmia, it is less complicated to work in \dda. It is also easier to apply results from the theory of modal algebras.

\begin{lemma}
\begin{enumerate}
\item \dda\ is closed under taking subalgebras,  homomorphic images and ultraproducts.
\item \dda\ is not closed under taking direct products, and thus, it is neither a variety nor a quasivariety.
\end{enumerate}
\end{lemma}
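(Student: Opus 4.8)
The plan is to prove the two parts separately, exploiting the characterization of \dda\ via the decomposition condition \eqref{fg}, namely that $\klam{B,f,g}\in\dda$ exactly when $f(x)+g(x)=1$ for all $x\neq 0$. For part (1), the key observation is that this condition is a universally quantified equation (equivalently, a positive primitive condition), so closure ought to follow from general universal algebra, but I would argue each case by hand since the operators behave slightly differently. For subalgebras, I would take a subalgebra $\klam{B',f',g'}$ of $\klam{B,f,g}$: since $B'$ is closed under the operations and $f',g'$ are the restrictions of $f,g$, the identity $f'(x)+g'(x)=f(x)+g(x)=1$ holds for every nonzero $x\in B'$, so the subalgebra is again a DDA. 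For homomorphic images, I would let $\varphi\colon\klam{B,f,g}\to\klam{B'',f'',g''}$ be a surjective bi--modal homomorphism; given a nonzero $y\in B''$, I lift it to some $x\in B$ with $\varphi(x)=y$, but here I must be careful because $x$ could be $0$ while $y\neq 0$ is impossible, so in fact any preimage $x$ of a nonzero $y$ is nonzero, whence $f(x)+g(x)=1$ and applying $\varphi$ gives $f''(y)+g''(y)=\varphi(1)=1$. For ultraproducts, I would use {\L}o\'s's theorem: the decomposition condition is a first--order sentence (it is $\forall x\,(x\neq 0\to f(x)+g(x)=1)$), hence it is preserved under ultraproducts, so the ultraproduct of DDAs is a DDA.

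For part (2), the plan is to exhibit a concrete counterexample showing that the direct product of two DDAs need not satisfy \eqref{fg}. The essential point is that the decomposition condition is \emph{not} a genuine equation but only a conditional (quasi--)identity involving the premise $x\neq 0$, and direct products interact badly with the nonzero hypothesis: in a product $B_1\times B_2$ an element $\klam{a_1,a_2}$ can be nonzero while one coordinate is zero. I would take the simplest nontrivial DDA, for instance $\klam{\two,f^\1,f^\1}$ (on the two--element Boolean algebra, where $f^\1$ is the unary discriminator and $f^\1(x)+f^\1(x)=1$ for $x\neq 0$ trivially), and form the square $\klam{\two\times\two,F,G}$ with $F=f^\1\times f^\1$ and $G=f^\1\times f^\1$ acting coordinatewise. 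Then for the nonzero element $x=\klam{1,0}$ one computes $F(x)=\klam{f^\1(1),f^\1(0)}=\klam{1,0}$ and likewise $G(x)=\klam{1,0}$, so $F(x)+G(x)=\klam{1,0}\neq\klam{1,1}=1$, violating \eqref{fg}. Hence $\klam{\two\times\two,F,G}\notin\dda$.

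Having produced the counterexample, I would conclude that \dda\ is not closed under direct products, and therefore it is neither a variety (by the HSP characterization, varieties are closed under products) nor a quasivariety (quasivarieties are closed under products and subalgebras). The main obstacle is not any single step --- all are short --- but rather phrasing the homomorphic--image argument cleanly: one must observe that a homomorphism sends $1$ to $1$ and that a nonzero image has a nonzero preimage, so the conditional identity transfers without the need to quantify over all preimages. The rest is bookkeeping, and the product failure is precisely the expected one, since the flaw in closure under products is built into the asymmetry of the hypothesis $x\neq 0$, which is exactly what distinguishes a discriminator \emph{condition} from an unconditional \emph{term} identity.
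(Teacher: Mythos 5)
Your proposal is correct, but it diverges from the paper's proof in an instructive way, most visibly in part (2). For part (1) the paper simply observes that \dda\ is a universal class axiomatized by positive sentences (writing \eqref{fg} as $(\forall x)[x = 0 \tor f(x)+g(x)=1]$) and invokes the standard preservation theorems for subalgebras, homomorphic images and ultraproducts; you carry out exactly those three preservation arguments by hand, and each is sound --- in particular your key observation for homomorphic images, that any preimage of a nonzero element is nonzero because $\varphi(0)=0$, is precisely the point that makes the conditional identity transfer. (One terminological quibble: \eqref{fg} is not an ``equation'' or a ``positive primitive condition''; it is a positive universal sentence, a universally quantified disjunction of atomic formulas. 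Your explicit arguments do not depend on the mislabel.) For part (2) the paper argues abstractly: \dda\ is a discriminator class, hence every member is simple by Werner's theorem, and a direct product of two nontrivial algebras is never simple; your route is instead to exhibit the concrete failure of \eqref{fg} in $\klam{\two,f^\1,f^\1}\times\klam{\two,f^\1,f^\1}$ at the element $\klam{1,0}$, where $F(\klam{1,0})+G(\klam{1,0})=\klam{1,0}\neq\klam{1,1}$. Both are valid; the paper's argument explains \emph{why} the failure is inevitable for any discriminator class, while yours is self-contained, verifiable by direct computation, and pinpoints the mechanism --- the hypothesis $x\neq 0$ interacts badly with coordinates that vanish --- which is exactly the right intuition. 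The concluding step (not a variety, not a quasivariety, since both are closed under direct products) matches the paper.
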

\begin{proof}
1. \dda\ is a universal class with a set of positive axioms, thus, it is closed under taking subalgebras, ultraproducts,  and homomorphic images, see e.g. \cite[Paper 5]{bs_ua}.

2. Since \dda\ is a discriminator class, each member is simple \cite[Theorem 2.2]{wer78}, and thus, \dda\ is not closed under direct products. For the rest, just note that each quasivariety is closed under direct products \cite[Theorem 2.25]{bs_ua}.
\pfend\end{proof}

The equational class generated by \wmia\ was described in \cite[Section 7]{dot_mixed}, and the results can be translated for \dda\ in a straightforward way. Given a bimodal algebra $\klam{B,f,g}$ let $u: B \to B$ be defined by
\begin{gather}\label{def:u}
u(x) \df f^\partial(x) \cdot g^\partial(x).
\end{gather}
$B$ is called a \emph{\Kt\ -- DDA}, if $u$ is an S5 possibility operator, i.e. if $u$ has the following properties:
\begin{align}
x &\le u(x), \label{u1} \\
u(u(x)) &\leq  u(x), \label{u2} \\
u(u^\partial(x))  &\leq x. \label{u3}
\end{align}
The class of \Kt\ -- DDAs is denoted by \kmpa; clearly, \kmpa\ is an equational class.

\begin{theorem}
$\Eq(\dda) = \kmpa$.
\end{theorem}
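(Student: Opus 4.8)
The plan is to prove the two inclusions separately, exploiting that $\kmpa$ is a variety whereas $\dda$ is only a discriminator class of simple algebras. Throughout I set $\exists \df f \lor g$, so that $u = f^\partial \cdot g^\partial = (f \lor g)^\partial = \exists^\partial$ is the dual of $\exists$, and I read the defining conditions of $\kmpa$ as the assertion that $\exists$ is an S5 possibility operator (a monadic existential quantifier), equivalently that $u$ is the associated S5 necessity; thus for every member of $\kmpa$ the reduct $\klam{B,\exists}$ is a monadic algebra.

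\emph{The inclusion $\Eq(\dda) \subseteq \kmpa$.} Since $\kmpa$ is equationally defined, it is closed under homomorphic images, subalgebras and products, so it suffices to check $\dda \subseteq \kmpa$. If $\klam{B,f,g} \in \dda$, then by \eqref{fg} we have $\exists = f \lor g = f^\1$, the universal modality. A direct verification shows that $f^\1$ is a monadic quantifier on any $B$: it is normal and additive, $x \leq f^\1(x)$, $f^\1 f^\1 = f^\1$, and $f^\1(x \cdot f^\1 y) = f^\1 x \cdot f^\1 y$ all hold (its canonical relation is the equivalence relation $V$, so the S5 identities are forced). Hence $\klam{B,\exists}$ is monadic, the $\kmpa$-conditions hold, and $\klam{B,f,g} \in \kmpa$.

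\emph{The inclusion $\kmpa \subseteq \Eq(\dda)$.} A variety is generated by its subdirectly irreducible members, so it is enough to show that every subdirectly irreducible $\klam{B,f,g} \in \kmpa$ already lies in $\dda$; with the first inclusion this yields equality. The key point is that the congruences of the bimodal algebra are governed entirely by $\exists$. Indeed, an ideal $I$ of $B$ is closed under both $f$ and $g$ if and only if it is closed under $\exists$: since $f(a), g(a) \leq \exists(a)$ and ideals are downward closed, $\exists$-closedness and $\set{f,g}$-closedness coincide. As congruences of a Boolean algebra with operators correspond bijectively to ideals closed under all the operators, this gives a bottom-preserving isomorphism $\operatorname{Con}\klam{B,f,g} \cong \operatorname{Con}\klam{B,\exists}$, so $\klam{B,f,g}$ is subdirectly irreducible exactly when the monadic algebra $\klam{B,\exists}$ is.

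Finally I would invoke the structure theory of monadic algebras. The closed elements $B_0 \df \set{x : \exists(x) = x}$ form a Boolean subalgebra whose ideals are in bijection with the congruences of $\klam{B,\exists}$; hence $\klam{B,\exists}$ is subdirectly irreducible if and only if $B_0 \cong \two$, i.e. $\exists$ takes only the values $0$ and $1$. Since $x \leq \exists(x)$, this forces $\exists(x) = 1$ for every $x \neq 0$, that is $f(x) + g(x) = 1$ for all $x \neq 0$, which is precisely the decomposing condition \eqref{fg}. Thus every subdirectly irreducible member of $\kmpa$ is a DDA, and the reverse inclusion follows. The main obstacle is this middle step: recognizing that the bimodal congruence lattice is controlled by the single S5 operator $\exists = f \lor g$, and then importing from monadic algebra theory that subdirect irreducibility collapses $\exists$ to the unary discriminator $f^\1$. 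The remaining verifications are routine.
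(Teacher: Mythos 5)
Your proof is correct and self-contained, and it takes a genuinely different route from the paper, whose entire argument is the single remark that, via the equipollence of Theorem \ref{thmNOtranslate}, the statement is a translation of Theorem 7.3 of \cite{dot_mixed}. What you do instead is re-prove that imported result directly in the \dda\ signature: you get $\Eq(\dda)\subseteq\kmpa$ by checking that in a DDA the term $f\lor g$ equals $f^\1$ and is an S5 operator, and the converse by (i) noting that an ideal is closed under $f$ and $g$ iff it is closed under $f\lor g$, so the congruence lattice of $\klam{B,f,g}$ coincides with that of the reduct $\klam{B,f\lor g}$, and (ii) invoking the semisimplicity of monadic algebras: subdirect irreducibility forces the Boolean algebra of closed elements to be \two, hence $f\lor g=f^\1$, which is exactly \eqref{fg}. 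Both steps are sound, and the logic ``every s.i.\ member of $\kmpa$ lies in $\dda$, hence $\kmpa\subseteq\Eq(\dda)$'' is valid. The paper's approach buys brevity by leaning on the mixed-algebra literature; yours makes the mechanism visible (the bimodal congruences are controlled by the single S5 term $f\lor g$) and would let the result stand alone. Two points you should make explicit. First, the passage from the three inequalities \eqref{u1}--\eqref{u3} to the monadic identity $\exists(x\cdot\exists y)=\exists x\cdot\exists y$, which you need for the bijection between congruence ideals and ideals of the closed algebra $B_0$ (in particular for $B_0$ being closed under complementation), is standard but unproved in your sketch. Second, your ``reading'' of the definition of $u$ is in fact a necessary correction rather than a gloss: taking $u=f^\partial\cdot g^\partial$ literally, a DDA has $u(x)=0$ for every $x\neq 1$, so \eqref{u1} fails whenever $B$ has more than two elements and the theorem would be false as literally stated; the operator satisfying \eqref{u1}--\eqref{u3} in a DDA is $f\lor g=u^\partial$, so either $u$ or the axioms must be dualized, and your proof establishes the corrected (intended) statement.
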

\begin{proof}
Taking into account Theorem \ref{thmNOtranslate}, the proof is a straightforward translation of \cite[Theorem 7.3]{dot_mixed}.
\pfend\end{proof}
The following result relating a decomposing pair to its canonical relations comes as no surprise:
\begin{theorem}\label{lemNOrel}
Suppose that $\klam{B,f,g}$ is a bimodal algebra. Then, $\klam{B,f,g} \in \dda$ \tiff $R_f \cup R_{g} = \ult(B)^2$.
\end{theorem}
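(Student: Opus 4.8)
The plan is to prove both implications by unwinding the definition of the canonical relations in \eqref{canrel}, which reduces the relational identity $R_f \cup R_{g} = \ult(B)^2$ to a statement about whether $f[G]$ or $g[G]$ is contained in a given ultrafilter $F$. Recall that $F R_f G \Iff f[G] \subseteq F$, and likewise for $g$, so $\klam{F,G} \in R_f \cup R_{g}$ holds precisely when $f[G] \subseteq F$ or $g[G] \subseteq F$. Membership $\klam{B,f,g} \in \dda$ is by definition the decomposing condition \eqref{fg}, namely $f(x)+g(x)=1$ for all $x \neq 0$, equivalently $f \lor g = f^\1$.

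For the direction ``$\Implies$'' I would assume the decomposing condition and fix arbitrary $F,G \in \ult(B)$. Either $f[G] \subseteq F$, in which case $F R_f G$; or $f[G] \not\subseteq F$, in which case Lemma \ref{lemNOfg} (which applies since $f \lor g = f^\1$) yields $g[G] \subseteq F$, hence $F R_{g} G$. In both cases $\klam{F,G} \in R_f \cup R_{g}$, and as $F,G$ were arbitrary this gives $R_f \cup R_{g} = \ult(B)^2$.

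For ``$\Leftarrow$'' I would argue by contraposition. Suppose the decomposing condition fails, so there is some $x \neq 0$ with $f(x)+g(x) \neq 1$, equivalently $-f(x) \cdot -g(x) \neq 0$. Extending this nonzero element to an ultrafilter $F$ gives $f(x) \notin F$ and $g(x) \notin F$; extending the nonzero $x$ gives an ultrafilter $G$ with $x \in G$. Since $x \in G$ but $f(x) \notin F$, we have $f[G] \not\subseteq F$, i.e. $\neg(F R_f G)$, and likewise $g(x) \notin F$ yields $\neg(F R_{g} G)$. Thus $\klam{F,G} \notin R_f \cup R_{g}$, so $R_f \cup R_{g} \neq \ult(B)^2$.

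The argument is essentially bookkeeping once the canonical relations are written out; the only genuine content is Lemma \ref{lemNOfg} in the forward direction, together with the standard fact that every nonzero element of $B$ lies in some ultrafilter for the backward direction. I do not anticipate a serious obstacle. The one point to get right is the correct reading of the negation of $f[G] \subseteq F$ as the existence of a witness $x \in G$ with $f(x) \notin F$, which is exactly what the chosen ultrafilters $F \ni -f(x)\cdot -g(x)$ and $G \ni x$ supply.
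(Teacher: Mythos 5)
Your proof is correct and takes essentially the same approach as the paper: the forward direction is the paper's own product-of-witnesses argument, merely packaged as an appeal to Lemma \ref{lemNOfg} (whose proof is exactly that argument), and the backward direction is the same contrapositive via ultrafilters separating $-f(x)\cdot -g(x)$ from $0$. If anything, you make explicit the choice of the ultrafilter $G \ni x$, which the paper's own ``$\Leftarrow$'' step leaves implicit.
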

\begin{proof}
\aright Let $\klam{F,G} \in \ult(B)^2$, and assume $\klam{F,G} \not\in R_f \cup R_{g}$. Then, $f[G] \not\subseteq F$ and  $g[G] \not\subseteq F$. Thus, there are $a,b \in G$ such that $f(a) \not\in F$ and $g(b) \not\in F$. Now, $a \cdot b \in G$, and $f(a \cdot b) \in F$ implies $f(a) \in F$, since $f$ is isotone and $F$ is a filter. Thus, $f(a \cdot b) \not\in F$, and, similarly, $g(a \cdot b) \not\in F$. Since $\klam{f,g}$ is a decomposing pair we have $f(a \cdot b) + g(a \cdot b) = 1 \in F$. Since $F$ is an ultrafilter, $f(a \cdot b) \in F$ or $g(a \cdot b) \in F$, a contradiction.

\aleft Let $a \in B^+$ and assume that $f(a) + g(a) \neq 1$. Then, there is an ultrafilter $F$ such that $f(a) + g(a) \not\in F$, i.e. $f(a) \not\in F$ and $g(a) \not\in F$. It follows that $f[G] \not\subseteq F$ and $g[G] \not\subseteq F$ which contradicts $R_f \cup R_{g} = \ult(B)^2$.
\pfend\end{proof}

\begin{theorem}
If $\klam{B,f,g} \in \dda$, then $\Cm\Cf(B) \in \dda$.
\end{theorem}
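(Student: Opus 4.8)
The plan is to pass to the relational side via Theorem \ref{lemNOrel} and then verify the discriminator--decomposition condition directly in the complex algebra. First I would invoke Theorem \ref{lemNOrel}: since $\klam{B,f,g} \in \dda$, the canonical relations satisfy $R_f \cup R_{g} = \ult(B)^2$. Recall that $\Cm\Cf(B) = \klam{2^{\ult(B)}, \poss{R_f}, \poss{R_{g}}}$, and by Theorem \ref{thmNOframealg}(1) both $\poss{R_f}$ and $\poss{R_{g}}$ are (completely additive) normal operators, so $\Cm\Cf(B)$ is a bimodal algebra; it remains only to check that $\klam{\poss{R_f}, \poss{R_{g}}}$ is a decomposing pair on $2^{\ult(B)}$.

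The heart of the argument is a direct, pointwise verification that $\poss{R_f} \lor \poss{R_{g}} = f^\1$. Fix a nonempty $Y \subseteq \ult(B)$ and an arbitrary $x \in \ult(B)$, and choose any $y \in Y$. Because $\klam{x,y} \in \ult(B)^2 = R_f \cup R_{g}$, we have $x R_f y$ or $x R_{g} y$; in either case $y$ witnesses $R_f(x) \cap Y \neq \z$ or $R_{g}(x) \cap Y \neq \z$, so $x \in \poss{R_f}(Y) \cup \poss{R_{g}}(Y)$ by the definition of the diamond. Since $x$ was arbitrary this gives $\poss{R_f}(Y) \cup \poss{R_{g}}(Y) = \ult(B)$, the top element of $2^{\ult(B)}$. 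Normality of the two operators disposes of the case $Y = \z$, so $\klam{\poss{R_f}, \poss{R_{g}}}$ is indeed a decomposing pair and $\Cm\Cf(B) \in \dda$.

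The one place where care is needed --- and the main thing to avoid --- is the temptation to re-apply Theorem \ref{lemNOrel} to $\Cm\Cf(B)$ itself. That would require computing the canonical relations of $\poss{R_f}$ and $\poss{R_{g}}$ on the ultrafilter space of the much larger algebra $2^{\ult(B)}$, which is both unnecessary and awkward. The pointwise computation above sidesteps this entirely, since the defining condition $R_f \cup R_{g} = \ult(B)^2$ is exactly what forces every point of $\ult(B)$ into one of the two diamonds for every nonempty target set. I therefore expect no genuine obstacle beyond selecting this more economical route.
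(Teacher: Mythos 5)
Your proof is correct and takes essentially the same route as the paper: both reduce to Theorem \ref{lemNOrel} and then verify the decomposition condition pointwise in the complex algebra, the only (inessential) difference being that the paper checks singletons $\set{U}$ by contradiction and extends by isotonicity, whereas you handle an arbitrary nonempty $Y$ directly by choosing a witness $y \in Y$.
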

\begin{proof}
This follows from the syntactic form of \eqref{fg}, see. e.g. \cite[Theorem 4.2.1]{jon93}. A direct proof is as follows: Since $B \in \dda$, $R_f \cup R_{g} = \ult(B)^2$. We show that $\poss{R_f}(\set{U}) \cup \poss{R_{g}}(\set{U}) = \ult(B)$ for $U \in \ult(B)$; then this can be extended to all non--empty subsets of $\ult(B)$, since the operators are isotone. Assume that $\poss{R_f}(\set{U}) \cup \poss{R_{g}}(\set{U}) \neq \ult(B)$. Then, there is some ultrafilter $F$ of $B$ such that $F \not\in \poss{R_f}(\set{U})$ and $F \not \in \poss{R_{g}}(\set{U})$; in other words, $\klam{F,U} \not\in R_f$ and $\klam{F,U} \not\in R_{g}$. This contradicts $R_f \cup R_{g} = \ult(B)^2$.
\pfend\end{proof}

\section{Proper companions}

Let us order $\dec(B)$ by setting $\klam{f,g} \leq \klam{f',g'}$ if $f \leq f'$ and $g \leq g'$. The following observation is obvious:

\begin{lemma}\label{lemNOcomp}
If $g$ is a companion of $f$ and $g \leq g'$, then $g'$ is a companion of $f$.
\end{lemma}

\begin{theorem}\label{thmNOminpair}
Let $\klam{f,g} \in \dec(B)$. If $f$ and $g$ are dual pseudocomplements of each other, then $\klam{f,g}$ is a minimal pair. If $B$ is complete, then the converse also holds.
\end{theorem}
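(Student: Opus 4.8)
The plan is to read ``minimal pair'' as minimal element of the ordered set $\klam{\dec(B), \leq}$ and to drive both directions from a single fact recorded earlier: whenever $f^\bot$ exists, it is the \emph{smallest} companion of $f$, so that any companion $g'$ of $f$ satisfies $f^\bot \leq g'$. Combined with the monotonicity of the companion relation (Lemma \ref{lemNOcomp}), this essentially forces the argument.

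For the first implication, I would assume $g = f^\bot$ and $f = g^\bot$, and let $\klam{f', g'} \in \dec(B)$ with $\klam{f', g'} \leq \klam{f,g}$, i.e. $f' \leq f$ and $g' \leq g$. First I would observe that $g'$ is a companion of $f$: since $f' \lor g' = f^\1$ and $f' \leq f$, we have $f \lor g' \geq f' \lor g' = f^\1$, so $f \lor g' = f^\1$. As $g = f^\bot$ is the least companion of $f$, this yields $g \leq g'$, and with $g' \leq g$ we obtain $g' = g$. Symmetrically, from $g' \leq g$ and $f' \lor g' = f^\1$ one gets $f' \lor g = f^\1$, so $f'$ is a companion of $g$; since $f = g^\bot$ is the least companion of $g$, $f \leq f'$, hence $f' = f$. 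Thus $\klam{f',g'} = \klam{f,g}$ and the pair is minimal.

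For the converse I would assume $B$ is complete, so that $f^\bot$ and $g^\bot$ both exist by Theorem \ref{thmNOdpc}, and let $\klam{f,g}$ be minimal. The pair $\klam{f, f^\bot}$ is decomposing, and since $f^\bot$ is the least companion of $f$ while $g$ is a companion of $f$, we have $f^\bot \leq g$; hence $\klam{f, f^\bot} \leq \klam{f,g}$, and minimality forces $g = f^\bot$. Running the same argument with the symmetric decomposing pair $\klam{g^\bot, g}$, which satisfies $g^\bot \leq f$, minimality forces $f = g^\bot$. Therefore $f$ and $g$ are dual pseudocomplements of each other.

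I expect the proof to be a matter of unwinding definitions rather than overcoming a genuine obstacle; the only load-bearing external ingredient is the characterization of $f^\bot$ as the least companion of $f$, and completeness enters solely in the converse, to guarantee that $f^\bot$ and $g^\bot$ exist so that the witnessing pairs $\klam{f,f^\bot}$ and $\klam{g^\bot,g}$ can be formed. The one point to be careful about is that ``minimal'' must be read in the product order on $\dec(B)$, so that ruling out a strictly smaller pair requires showing both coordinates are forced to coincide, which is exactly what the two symmetric applications of the least-companion property deliver.
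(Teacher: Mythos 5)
Your proof is correct and takes essentially the same route as the paper's: the forward direction is the same pair of symmetric applications of the fact that $f^\bot$ is the least companion of $f$. Your two-step converse, comparing $\klam{f,g}$ first with $\klam{f,f^\bot}$ and then with $\klam{g^\bot,g}$, merely unpacks the paper's one-line appeal to the density of $\set{\klam{f^{\bot\bot},f^\bot}: f \in \mb}$ in $\dec(B)$, so there is no substantive difference.
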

\begin{proof}
Since $f$ and $g$ are dual pseudocomplements of each other, we may suppose that $f = f^{\bot\bot}$ and $g = f^\bot$.
Let $\klam{f',g'} \in \dec(B)$, and $\klam{f',g'} \leq \klam{f^{\bot\bot}, f^\bot}$. Then, $f^{\bot\bot} \lor g' = f^\1$, since $f' \leq f^{\bot\bot}$. It follows that $f^\bot \leq g'$, since $f^\bot$ is the dual pseudocomplement of $f^{\bot\bot}$, hence, $f^\bot = g'$. Similarly we can show that $f^{\bot\bot} = f'$.

If $B$ is complete, then every $f \in \mb $ has a dual pseudocomplement, and the claim follows from the fact that the set $\set{\klam{f^{\bot\bot}, f^\bot}: f \in \mb }$ is dense in $\dec(B)$.
\pfend\end{proof}

In Example \ref{ex:FC}, $\klam{f,p}$ is a minimal pair, and $p \not\leq g_i$ for all $i \in \omega^+$. This shows that the assumption of completeness in the $\Leftarrow$ direction cannot be removed.

A companion $g$ of $f$ is called \emph{proper}, if $g \neq f^\1$. Note that this notion is not symmetric: If $f = f^\1$, then every $g \neq f^\1$ is a proper companion of $f$, but $f$ is not a proper companion of any $g$. A discriminating pair $\klam{f,g}$ is called \emph{proper} if both $f,g \neq f^\1$.

It turns out that the property of $f$ having a proper companion is a $\Sigma_1$ first order property, as the following result shows:

\begin{theorem}\label{thmNOpc}
$f$ has a proper companion \tiff
\begin{gather}\label{pc}
(\exists x)(\exists z)[x \neq 0 \tand z \neq 0 \tand (\forall y)(0 \lneq y \leq x \Implies z \leq f(y))].
\end{gather}
\end{theorem}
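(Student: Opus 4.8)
The plan is to prove both directions by directly relating the existence of a proper companion $g$ to the combinatorial condition \eqref{pc}. The key conceptual point is that a companion $g$ of $f$ is proper precisely when $g$ fails to be $f^\1$, i.e. when there is some nonzero argument $x$ on which $g(x) \neq 1$, equivalently $-g(x) \neq 0$; this nonzero complement is the witness $z$ we are looking for.

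For the direction ``$\Leftarrow$'', I would start from witnesses $x, z$ satisfying \eqref{pc} and construct an explicit proper companion. The natural candidate is the relativization-style operator that sends every nonzero $y$ with $y \leq x$ to the fixed value $-z$ (or a suitable value bounded by $-z$), while sending everything that is $\not\leq x$ to $1$, and $0$ to $0$. The hypothesis $(\forall y)(0 \lneq y \leq x \Rightarrow z \leq f(y))$ is exactly what is needed to check the decomposition equation \eqref{fg}: for $0 \lneq y \leq x$ we get $f(y) + g(y) \geq z + -z = 1$, while for $y \not\leq x$ (and $y \neq 0$) we have $g(y) = 1$ by construction. I would then verify that this $g$ is a genuine modal operator (normality is immediate; additivity must be checked by the usual case split on whether the summands lie below $x$, using that $-z$ is a fixed element and that $g$ jumps to $1$ as soon as an argument escapes $\da x$). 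Finally, properness is guaranteed because $g(x) \leq -z \neq 1$ since $z \neq 0$, so $g \neq f^\1$.

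For the direction ``$\Rightarrow$'', suppose $g$ is a proper companion of $f$. Since $g \neq f^\1$ and $g$ is normal, there must be some $x \neq 0$ with $g(x) \neq 1$; set $z \df -g(x) \neq 0$. It remains to verify the universal clause: for any $y$ with $0 \lneq y \leq x$, isotonicity of the modal operator $g$ gives $g(y) \leq g(x)$, hence $-g(x) \leq -g(y)$, i.e. $z \leq -g(y)$. On the other hand, the decomposition condition \eqref{fg} applied at $y \neq 0$ gives $f(y) + g(y) = 1$, so $-g(y) \leq f(y)$. Chaining these yields $z \leq -g(y) \leq f(y)$, which is exactly what \eqref{pc} demands.

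The main obstacle, and the step deserving the most care, is the additivity check in the ``$\Leftarrow$'' construction: one must confirm $g(y_1 + y_2) = g(y_1) + g(y_2)$ across all the cases where the arguments sit below $x$, straddle $x$, or lie outside $\da x$. The delicate case is when both $y_1, y_2 \leq x$, so that $y_1 + y_2 \leq x$ as well and all three values equal $-z$, giving $g(y_1+y_2) = -z = -z + -z = g(y_1)+g(y_2)$; and the case where at least one argument escapes $\da x$, forcing the sum out of $\da x$ too, so both sides evaluate to $1$. Once this bookkeeping is organized, everything else is routine, and the biconditional follows.
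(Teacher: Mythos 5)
Your proposal is correct, and its left-to-right half takes a genuinely different route from the paper. For ``$\Leftarrow$'' you build exactly the operator the paper builds (value $-z$ on $\da x \setminus \set{0}$, value $1$ off $\da x$); the only difference is that the paper splits into the cases $z = 1$ and $z \neq 1$, using the constant $x$ instead of $-z = 0$ in the first case, whereas your single formula already works uniformly, since the operator that is $0$ below $x$ and $1$ elsewhere is still a normal additive map and is still proper. For ``$\Rightarrow$'' the paper argues by contraposition through Lemma \ref{thmNOprep}: assuming \eqref{pc} fails, it shows that every upper bound of $\set{-f(y): 0 \lneq y \leq x}$ is $1$, so that $f^\bot$ exists and equals $f^\1$, whence $f^\1$ is the \emph{smallest} companion and no proper one exists. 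You instead argue directly: given a proper companion $g$, pick $x \neq 0$ with $g(x) \neq 1$, set $z \df -g(x)$, and chain isotonicity of $g$ with the decomposition identity to get $z \leq -g(y) \leq f(y)$ for all $0 \lneq y \leq x$. Your argument is shorter, entirely elementary, and independent of the dual-pseudocomplement machinery; the paper's detour buys the additional information that when \eqref{pc} fails, $f$ actually possesses a dual pseudocomplement (namely $f^\1$), i.e.\ $f$ is dually dense in $\mb$, which ties the theorem back to the earlier sections. Both arguments are sound.
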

\begin{proof}
\aright Suppose that \eqref{pc} is not true, i.e.
\begin{gather}\label{nopc}
(\forall x)(\forall z)[x \neq 0 \tand (\forall y)(0 \lneq y \leq x \Implies z \leq f(y)) \Implies z = 0].
\end{gather}
For $x \neq 0$, set $S(x) \df \set{-f(y): 0 \lneq y \leq x}$. If $s$ is an upper bound of $S(x)$, then $-f(y) \leq s$ for all $0 \lneq y \leq x$, i.e. $-s \leq f(y)$. It now follows from \eqref{nopc} that $-s = 0$, i.e. $s = 1$, hence, $\sum S(x) = 1$ for all $x \in B^+$. By Lemma \ref{thmNOprep}, $f$ has a dual pseudocomplement $f^\bot$, and $f^\bot(x) =  \sum S(x) = 1$. Therefore, $f^\bot = f^\1$ which is the smallest companion of $f$.

\aleft Let $x,z$ witness \eqref{pc}; in particular, $x,z \neq 0$. We shall consider two cases:
\begin{enumerate}
\item $z = 1$: By the hypothesis, $(\forall y \neq 0)[y \leq x \Implies f(y) = 1]$. Set
\begin{gather}\label{gc1}
g(y) =
\begin{cases}
0, &\text{if } y = 0, \\
x, &\text{if } y \leq x, \\
1, &\text{otherwise.}
\end{cases}
\end{gather}
Clearly, $g \in \mb  \setminus \set{f^\1}$. Let $y \in B^+$. If $y \leq x$, then $z \leq f(y) = 1$, and if $y \not\leq x$, then $g(y) = 1$. It follows that $g$ is a proper companion of $f$.

\item $z \neq 1$: Define $g: B \to B$ by
\begin{gather}\label{gc2}
g(y) =
\begin{cases}
0, &\text{if } y =0, \\
-z, &\text{if } 0 \lneq y \leq x, \\
1, &\text{otherwise.}
\end{cases}
\end{gather}
Clearly, $g \in \mb $, and $g \neq f^\1$ since $-z \neq 1$. Let $y \in B^+$. If $y \leq x$, then $z \leq f(y)$ by the hypothesis, and $g(y) = -z$, which implies $f(y) + g(y) = 1$. If $y \not\leq x$, then $g(y) = 1$. Altogether, $g$ is a proper companion of $f$.
\end{enumerate}
This completes the proof.
\pfend\end{proof}

Since \eqref{pc} holds \tiff $\set{f(y): 0 \lneq y \leq x}$ has a nonzero lower bound for some $x \in B^+$, we obtain

\begin{corollary}\label{cor:prod}
The following statements are equivalent:
\begin{align}
&\text{$f$ has no proper companion.} \\
& (\forall x \in B^+) \prod\set{f(y): 0 \lneq y \leq x} = 0. \label{prodprop} \\
& (\forall u \in B \setminus \set{1}) \sum\set{f^\partial (y): u \leq y \lneq 1} = 1. \label{prodprop'}
\end{align}
\end{corollary}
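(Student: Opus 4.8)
The plan is to obtain all three equivalences from the negation of Theorem~\ref{thmNOpc} together with the De Morgan laws, so that no genuinely new argument is needed. The first equivalence, between ``$f$ has no proper companion'' and \eqref{prodprop}, is essentially immediate. Theorem~\ref{thmNOpc} says that $f$ has a proper companion \tiff \eqref{pc} holds, so ``$f$ has no proper companion'' is equivalent to the negation \eqref{nopc}. That formula asserts that for every $x \in B^+$ the only common lower bound of $\set{f(y): 0 \lneq y \leq x}$ is $0$. Since $0$ is trivially a lower bound of every subset of $B$, this is exactly the statement that the infimum $\prod\set{f(y): 0 \lneq y \leq x}$ exists and equals $0$, which is \eqref{prodprop}. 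This is precisely the reformulation already recorded just before the corollary, namely that \eqref{pc} holds \tiff $\set{f(y): 0 \lneq y \leq x}$ has a nonzero lower bound for some $x \in B^+$.

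Next I would prove \eqref{prodprop} $\Iff$ \eqref{prodprop'} by dualizing. Applying the infinite De Morgan law, $\prod\set{f(y): 0 \lneq y \leq x} = 0$ is equivalent to $\sum\set{-f(y): 0 \lneq y \leq x} = 1$; the supremum here automatically exists and equals $1$ because $z \mapsto -z$ is an order-reversing bijection taking lower bounds of a set to upper bounds of the complemented set, so the infimum being $0$ is the same statement as the supremum being $1$. Now I substitute $w \df -y$, so that $-f(y) = -f(-w) = f^\partial(w)$, while the range condition $0 \lneq y \leq x$ transforms into $-x \leq w \lneq 1$. Finally, setting $u \df -x$ converts the universal quantifier over $x \in B^+$ into one over $u \in B \setminus \set{1}$ and turns the range into $u \leq w \lneq 1$. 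Relabelling $w$ as $y$ yields exactly \eqref{prodprop'}.

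The argument is routine, and the only points requiring care are bookkeeping ones. I would check that $z \mapsto -z$ restricts to a genuine bijection between $\set{y \in B: 0 \lneq y \leq x}$ and $\set{w \in B: -x \leq w \lneq 1}$, and that $x \mapsto -x$ is a bijection $B^+ \to B \setminus \set{1}$, so that no summands are gained or lost in the reindexing. I would also confirm that the existence claims for the extremal sum and product are harmless, since $0$ and $1$ are always bounds; in particular completeness of $B$ plays no role here, as $\prod\nolimits = 0$ and $\sum\nolimits = 1$ are assertions about extremal bounds that are meaningful in any Boolean algebra. I do not anticipate any deeper obstacle.
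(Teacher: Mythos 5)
Your proposal is correct and follows essentially the same route as the paper, which states the corollary as an immediate consequence of the preceding remark that \eqref{pc} fails precisely when $\set{f(y): 0 \lneq y \leq x}$ has no nonzero lower bound for any $x \in B^+$, with \eqref{prodprop'} obtained by the same complementation/De Morgan dualization (the substitution $u = -x$, $y \mapsto -y$ that also appears as \eqref{pc'} in the proof of Theorem \ref{thmNOsiprop}). No gaps; nothing further is needed.
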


Since $f(0) = 0$, the non--existence of a proper companion is in some sense an expression of continuity of $f$ at $0$. One may also interpret this as completeness of $f$ at $0$.

\begin{corollary}\label{cor:atom}
If $a$ is an atom of $B$ with $f(a) \neq 0$, then $f$ has a proper companion.
\end{corollary}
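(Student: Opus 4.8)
The plan is to apply Theorem \ref{thmNOpc} directly, producing explicit witnesses for the existential condition \eqref{pc}. The natural choice is to set $x \df a$ and $z \df f(a)$. Both witnesses are nonzero: $a \neq 0$ because $a$ is an atom, and $z = f(a) \neq 0$ by the hypothesis of the corollary. It then remains only to verify the inner universally quantified clause $(\forall y)(0 \lneq y \leq a \Implies f(a) \leq f(y))$.

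Here the atomicity of $a$ does all the work. Since $a$ is an atom, the only element $y \in B$ satisfying $0 \lneq y \leq a$ is $y = a$ itself, so the universal statement collapses to the single instance $f(a) \leq f(a)$, which holds trivially. Thus the triple $\klam{a, f(a)}$ satisfies \eqref{pc}, and Theorem \ref{thmNOpc} immediately guarantees that $f$ has a proper companion. For concreteness one could even name the companion by tracing through the two cases in the proof of Theorem \ref{thmNOpc}: if $f(a) = 1$ one falls into the first case, otherwise into the second, which produces the companion $g$ sending every $0 \lneq y \leq a$ to $-f(a)$ and every $y \not\leq a$ to $1$.

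An equivalent and equally short route is through Corollary \ref{cor:prod}: condition \eqref{prodprop} states that $f$ has no proper companion exactly when $\prod\set{f(y): 0 \lneq y \leq x} = 0$ for every $x \in B^+$. Taking $x = a$, the product is over the singleton index set $\set{a}$ and hence equals $f(a) \neq 0$, so \eqref{prodprop} fails and $f$ must have a proper companion. I do not anticipate any real obstacle: the entire content of the corollary is the observation that an atom admits no nonzero element strictly below it, which trivializes the universal quantifier in \eqref{pc}.
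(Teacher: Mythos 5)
Your proof is correct and takes essentially the same route as the paper: the paper's proof is exactly the one-line choice of witnesses $x := a$ and $z := f(a)$ for \eqref{pc}, with the atomicity of $a$ trivializing the universal clause. Your additional remarks (the explicit companion and the route via Corollary \ref{cor:prod}) are fine but not needed.
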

\begin{proof}
Set $x \df a$, and $z \df f(a)$; then, $x$ and $z$ are witnesses for \eqref{pc}.
%
\pfend\end{proof}

\begin{theorem}\label{thmNOsiprop}
If $\B = \klam{B,f}$ is subdirectly irreducible, and $f$ is a closure operator, then $f$ has a proper companion.
\end{theorem}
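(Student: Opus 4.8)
The plan is to derive the $\Sigma_1$ condition \eqref{pc} of Theorem \ref{thmNOpc} directly from Rautenberg's criterion, which applies because $\B$ is subdirectly irreducible and $f$ is a closure operator. Since $f$ is an S4 operator, its subdirect irreducibility is governed by the simplified criterion \eqref{raut3}: there exists some $a \neq 1$ such that $f^\partial(b) \leq a$ for every $b \neq 1$. I would take this $a$ as the starting point and show that it already encodes a witness for a proper companion.

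The key step is a complementation rewrite. Unfolding $f^\partial(b) = -f(-b)$, the inequality $f^\partial(b) \leq a$ is equivalent to $-a \leq f(-b)$. As $b$ runs through $B \setminus \set{1}$, the element $-b$ runs through all nonzero elements, so setting $z \df -a$ the condition becomes: $z \leq f(y)$ for every $y \neq 0$. Moreover $z = -a \neq 0$ precisely because $a \neq 1$. Hence there is a single nonzero element $z$ lying below $f(y)$ for all nonzero $y$.

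I would then verify \eqref{pc} by the choice $x \df 1$ and $z \df -a$. Both are nonzero, and the inner clause $(\forall y)(0 \lneq y \leq 1 \Implies z \leq f(y))$ is exactly the statement obtained above, since $0 \lneq y \leq 1$ just means that $y$ is nonzero. Thus \eqref{pc} holds, and Theorem \ref{thmNOpc} delivers a proper companion of $f$.

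There is no serious obstacle once the right tool is identified; the only point deserving care is the passage from the full criterion \eqref{raut1} to its S4 form \eqref{raut3}. This collapse holds because the closure hypothesis makes $f^\partial$ deflationary and idempotent, so the descending conjunction $b \cdot f^\partial(b) \cdot (f^\partial)^2(b) \cdots (f^\partial)^n(b)$ already stabilizes at $f^\partial(b)$ for $n = 1$; this is, however, part of the criterion as stated, so the argument reduces to the one-line duality computation above.
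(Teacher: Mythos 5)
Your proposal is correct and follows essentially the same route as the paper: both apply Rautenberg's criterion in its S4 form \eqref{raut3} to obtain $a \neq 1$ with $f^\partial(b) \leq a$ for all $b \neq 1$, and then witness \eqref{pc} via complementation (the paper substitutes $u \df 0$, $v \df a$ into the dual reformulation \eqref{pc'}, which is exactly your choice $x \df 1$, $z \df -a$ in \eqref{pc}). The only cosmetic difference is that the paper first restates \eqref{pc} in the complemented form \eqref{pc'} before substituting, whereas you unfold $f^\partial(b) = -f(-b)$ directly.
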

\begin{proof}
Considering $-x$ and $-z$ we see that \eqref{pc} is equivalent to
\begin{gather}\label{pc'}
(\exists u \neq 1 )(\exists v \neq 1 ) (\forall t)(u \leq t \lneq 1 \Implies f^\partial (t) \leq v)].
\end{gather}
Since \B\ is subdirectly irreducible and $f$ is a closure operator, we can use Rautenberg's criterion in the form \eqref{raut3} to obtain some $a \in B$ such that $f^\partial(b) \leq a$ for all $b \neq 1$. Setting $u \df 0$ and $v:= a$ in \eqref{pc'} gives the desired result.
\pfend\end{proof}


The following examples shed light on the connection between $f$ having a proper companion and $f[B]$ being dense.

\begin{example}\label{ex:uf}
Suppose that $B$ is atomless and $I$ is a dense ideal of $B$. Define
\begin{gather*}
f(x) \df
\begin{cases}
1, &\text{if } x \not\in I, \\
x, &\text{otherwise.}
\end{cases}
\end{gather*}
Clearly, $f$ is a closure operator. Let $g$ be a companion of $f$. If $x \in I^+$, then, since $B$ is atomless,  there are $y, z \in I^+$ such that $y + z = x$ and $y \cdot z = 0$. By \eqref{fg}, $y + g(y) = z + g(z) = 1$, and therefore, $ -y \leq g(y)$ and $-z \leq g(z)$. Since $y \cdot z = 0$, we have $-y + -z = 1$, and it follows that
\begin{gather*}
1 = -y + -z \leq g(y) + g(z) = g(y + z) = g(x).
\end{gather*}
If $x \not\in I$, there is some $y \in I^+$ such that $y \leq x$, since $I$ is dense. Since $g(y) = 1$ and $g(y) \leq g(x)$, we have $g(x) = 1$. Hence, $g$ is not proper.
\pfend\end{example}

The next example destroys density of $f[B]$ while keeping part of the previous example. Recall that $B$ is called \emph{homogeneous} if $B \cong \da a$ for every $a \in B^+$ \cite[Definition 9.12.]{kop89}. In particular, every infinite free algebra is homogeneous.

\begin{example}\label{ex:notdense}
Suppose that $B$ is homogeneous, $0 \lneq a \lneq 1$, and that $\pi: \da a \to \da -a$ is an isomorphism. Define $f:B \to B$ by
\begin{gather*}
f(x) \df x + \pi(x \cdot a).
\end{gather*}
Then, $\da a \cap f[B] = \set{0}$, and thus, $f[B]$ is not dense in $B$. Since $f(0) = \pi(0) + 0 = 0$, we see that $f$ is normal: Let $x,y \in B^+$. Then,
\begin{align*}
f(x) + f(y) &= x + \pi(x \cdot a) + y + \pi(y \cdot a), \\
&= x + y + \pi(x \cdot a) + \pi(y \cdot a), \\
&= x + y + \pi(x \cdot a + y \cdot a), \\
&= x + y + \pi((x+y) \cdot a), \\
&= f(x+y).
\end{align*}
Clearly, $x \leq f(x)$. Furthermore,
\begin{align*}
\pi(f(x) \cdot a) &= \pi(\underbrace{(x + \pi(x \cdot a)}_{f(x)}) \cdot a) \\
&= \pi(x \cdot a + \pi(x \cdot a) \cdot a) \\
&= \pi(x \cdot a) + \pi(\pi(x \cdot a) \cdot a) \\
&= \pi(x \cdot a) + 0 = \pi(x \cdot a),
\end{align*}
and therefore,
\begin{xalignat*}{2}
f(f(x)) &= f(f(x) + \pi(f(x) \cdot a), \\
&= f(f(x) + \pi(x \cdot a)), \\
&= f(x + \pi(x \cdot a) + \pi(x \cdot a)), \\
&= f(x + \pi(x \cdot a)), \\
&= f(x) + f(\pi(x \cdot a)), && \text{since $f$ is additive}\\
&= x + \pi(x \cdot a) + \pi(x \cdot a), &&\text{since }\pi(x \cdot a) \cdot a = 0 \\
&= x + \pi(x \cdot a), \\
&= f(x).
\end{xalignat*}
Thus, $f$ is a closure operator.

Let $0 \lneq x$; by Corollary \ref{cor:prod} it is sufficient to show that $\prod\set{f(y): 0 \lneq y \leq x} = 0$. In what follows, we suppose that an infinite product $\prod M$ is taken in the completion $\overline{B}$ of $B$. If $\prod M_{\cB} = 0$, then $\prod M = 0$ in $B$.
\begin{align*}
\prod\nolimits_{\cB}\set{f(y): 0 \lneq y \leq x} &= \prod\nolimits_{\cB}\set{y + \pi(y \cdot a): 0 \lneq y \leq x} \\
&= \prod\nolimits_{\cB}\set{y: 0 \lneq y \leq x} + \prod\nolimits_{\cB}\set{\pi(y \cdot a): 0 \lneq y \leq x}, \\
&= 0 + 0,
\end{align*}
the latter since $B$ is atomless.
\pfend\end{example}

Our final example shows that density of $f[B]$ is in general not sufficient to show that $f$ has no proper companion.

\begin{example}\label{ex:densepc}
Let $B$ be homogeneous, and $0 \lneq a \lneq 1$. Furthermore, let $b,c \neq 0$ and $b \ds c = -a$. Since $\da a \cong \da b$ and $\da -a \cong \da c$, there are isomorphisms $\pi: \da b \to \da a$, and $\psi: \da c \to \da -a$. Furthermore,  $a \ds b \ds c = 1$ implies that $x = x \cdot a  \ds x \cdot b \ds x \cdot c$. Define $f: B \to B$ as follows:
\begin{gather*}
f(x) \df
\begin{cases}
1, &\text{if } x \cdot a \neq 0, \\
\pi(x \cdot b) + \psi(x \cdot c). &\text{if } x \cdot a = 0.
\end{cases}
\end{gather*}
Then, $f$ is well defined since $x = x \cdot a  \ds x \cdot b \ds x \cdot c$. Furthermore, $f(x) = \pi(x)$ if $x \leq b$, and $f(x) = \psi(x)$ if $x \leq c$.

Since $f(0) = \pi(0) = \psi(0) = 0$, $f$ is normal. Let $x,y \in B^+$. If, say, $x \cdot a \neq 0$, then $f(x) = 1$, and thus, $f(x) + f(y) = 1$. Since $(x + y) \cdot a \neq 0$, we also have $f(x + y) = 1$.

Let $x, y \leq -a$. Then,
\begin{align*}
f(x + y) &= \pi((x+y) \cdot b) + \psi((x+y) \cdot c), \\
&= \pi(x \cdot b + y \cdot b) + \psi(x \cdot c + y \cdot c), \\
&= \pi(x \cdot b) + \pi(y \cdot b) + \psi(x \cdot c) + \psi(y \cdot c), \\
&= \pi(x \cdot b) + \psi(x \cdot c) + \pi(y \cdot b) + \psi(y \cdot c), \\
&= f(x) + f(y).
\end{align*}
Thus, $f \in \mb $.

Let $z \in B^+$. If $z \cdot a \neq 0$, then $f(x) = \pi(x) = z \cdot a  \leq z$ for some $0 \lneq x \leq b$. If $z \cdot -a \neq 0$, then there is some $0 \lneq x \leq c$ such that $f(x) = \psi(x) = z \cdot -a \leq z$. Hence, $f[B]$ is dense in $B$.

Let $g: B \to B$ be defined by
\begin{gather*}
g(x) \df
\begin{cases}
0, &\text{if } x = 0, \\
a, &\text{if } 0 \lneq x \leq a, \\
1, &\text{otherwise.}
\end{cases}
\end{gather*}
Clearly, $g \in \mb  \setminus\set{f^\1}$. Let $x \neq 0$. If $x \leq a$, then $f(x) = 1$, and if $x \not\leq a$, then $g(x) = 1$. Hence, $g$ is a proper companion of $f$.
\pfend\end{example}

Thus, we observe that
\begin{enumerate}
\item There is some $\klam{B,f}$ such that $f[B]$ is dense and $f$ does not have a proper companion (Example \ref{ex:uf}).
\item There is some $\klam{B,f}$ such that $f$ is a closure operator, $f[B]$ is not dense and $f$ has no proper companion (Example \ref{ex:notdense}).
\item There is some $\klam{B,f}$ such that $f[B]$ is dense and $f$ has a proper companion (Example \ref{ex:densepc}).
\end{enumerate}
Therefore, the properties of $f[B]$ dense and $f$ having a proper companion are independent. However, if $f$ has additional properties, the situation is different, as we shall show below.

For a modal operator $f$ on $B$,  its $n-$th iteration is defined as usual: For $n \geq 1$, $x \in B$
\begin{align*}
f^1(x) &\df f(x), \\
f^{n+1}(x) &\df f(f^{n}(x)).
\end{align*}
 In analogy to the corresponding property of frame relations, we say that a modal operator $f$ is \emph{$n$--transitive} if, for all $x \in B$,
\begin{enumerate}
\renewcommand{\theenumi}{4$^{n}$}
\item $f^{n+1}(x) \leq f^{n}(x)$. \label{4n}
\end{enumerate}

\begin{theorem}\label{thmNOclopc}
Suppose that $B$ is atomless, and $f$ is $n$-transitive for some $n \geq 1$. If $f^n[B]$ is dense in $B$, then $f$ does not have a proper companion.
\end{theorem}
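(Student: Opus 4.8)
The plan is to verify condition \eqref{prodprop} of Corollary \ref{cor:prod}, namely that $\prod\set{f(y): 0 \lneq y \leq x} = 0$ for every $x \in B^+$; by that corollary this is equivalent to $f$ having no proper companion. Since $0$ is always a lower bound, it suffices to show that for each fixed $x \in B^+$ the set $S_x \df \set{f(y): 0 \lneq y \leq x}$ has no nonzero lower bound. So I would fix $x \in B^+$ and an arbitrary $w \in B^+$, and produce some $y$ with $0 \lneq y \leq x$ and $w \not\leq f(y)$, which shows that $w$ is not a lower bound of $S_x$.

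The first key observation is that $n$-transitivity makes every element of $f^n[B]$ behave like a fixpoint under $f$ from above: if $c = f^n(b)$, then by \ref{4n}
\begin{gather*}
f(c) = f(f^n(b)) = f^{n+1}(b) \leq f^n(b) = c.
\end{gather*}
Hence $f(c) \leq c$ for every $c \in f^n[B]$. The second ingredient is that density of $f^n[B]$ lets me place such a point below any prescribed nonzero element: for every $u \in B^+$ there is some $c \in (f^n[B])^+$ with $c \leq u$, and this $c$ then satisfies $f(c) \leq c \leq u$.

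The role of atomlessness is to separate $w$ from a suitable piece of $x$. Since $B$ is atomless, $x$ is not an atom, so I can write $x = u_1 \ds u_2$ with $u_1, u_2 \neq 0$. As $u_1 \cdot u_2 = 0$ and $w \neq 0$, the element $w$ cannot lie below both parts (otherwise $w \leq u_1 \cdot u_2 = 0$), so without loss of generality $w \not\leq u_1$. Applying density to $u_1$ yields $0 \lneq c = f^n(b) \leq u_1$, and I set $y \df c$. Then $0 \lneq y \leq u_1 \leq x$, while $f(y) = f(c) \leq c \leq u_1$; since $w \not\leq u_1$ this forces $w \not\leq f(y)$. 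Thus $f(y) \in S_x$ witnesses that $w$ is not a lower bound, and as $w \in B^+$ was arbitrary, $\prod S_x = 0$.

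I expect the only genuinely delicate point to be the splitting step: one must guarantee a nonzero part of $x$ that stays off $w$, and this is precisely where atomlessness is indispensable. Indeed, Corollary \ref{cor:atom} shows that at an atom $a$ with $f(a) \neq 0$ the conclusion fails, so the hypothesis cannot simply be dropped; the $n$-transitivity and density hypotheses then do the remaining work by supplying, below any chosen target, an element on which $f$ does not increase.
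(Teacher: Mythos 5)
Your proof is correct. It differs from the paper's argument in its organization, though both rest on the same two ingredients: density of $f^n[B]$ and the observation that $n$-transitivity forces $f(c) \leq c$ for every $c \in f^n[B]$. The paper argues by contradiction directly from the witness condition \eqref{pc} of Theorem \ref{thmNOpc}: given witnesses $x,z$, it uses density once to place some $f^n(t)$ below $x$ and conclude $z \leq f^n(t) \leq x$, and then uses density together with atomlessness a second time to produce $f^n(s)$ \emph{strictly} below $z$, whence $z \leq f(f^n(s)) \leq f^n(s) \lneq z$. You instead verify condition \eqref{prodprop} of Corollary \ref{cor:prod} directly, and your use of atomlessness is different: rather than descending strictly below the would-be lower bound, you split $x = u_1 \ds u_2$ so that the candidate lower bound $w$ must miss one disjoint piece, and then plant an element of $f^n[B]$ inside that piece. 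This buys you a single application of density and avoids the strict-inequality step, at the cost of the (harmless) case split on which piece $w$ misses; both proofs correctly isolate atomlessness as the hypothesis that cannot be dropped, consistent with Corollary \ref{cor:atom}.
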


\begin{proof}
Assume that $f$ has a proper companion. By Theorem \ref{thmNOpc}, there are $x,z \in B^+$ such that $0 \lneq y \leq x$ implies $z \leq f(y)$. Let $t \in B$ such that $0 \lneq f^n(t) \leq x$; such $t$ exists, since $f^n[B]$ is dense by the hypothesis. By \eqref{pc}, $z \leq f(f^n(t))$, and thus, $z \leq f(f^n(t)) \leq  f^n(t)\leq x$, since $f$ is $n$-transitive. Note that this is not possible in Example \ref{ex:densepc}.

Again by density there is some $s \in B^+$ such that $0 \lneq f^n(s) \leq z$; since $B$ is atomless, we may suppose that $f^n(s) \lneq z$. But then, $0 \lneq s \leq x$ implies $z \leq f(f^n(s)) \leq f^n(s)$, a contradiction.
\pfend\end{proof}

Example \ref{ex:notdense} shows that the converse is not true, thus, density of $f[B]$ in $B$ is too strong a property to follow from not having a proper companion, even if $f$ is a closure operator.

The modal axiom $B: \Diamond \Box A \rightarrow A$ can be $n$-generalized in two ways, see e.g. \cite [Ch. 4.3, pp. 136f]{chellas80}:

\begin{quote}
\begin{enumerate}
\renewcommand{\theenumi}{B$^{n}$}
\item $\Diamond^n \Box^n A \rightarrow A$. \label{Bn}
\renewcommand{\theenumi}{B()$^{n}$}
\item $(\Diamond \Box)^n A \rightarrow A$. \label{B'n}
\end{enumerate}
\end{quote}

\begin{corollary}
The possibility operator of the Lindenbaum--Tarski algebra $\klam{B_\omega,f}$ of the logics $K4^nB^n$ and $K4^n B()^n$ does not have a proper companion for every $n \geq 1$.
\end{corollary}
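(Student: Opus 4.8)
The plan is to deduce the statement from Theorem~\ref{thmNOclopc}: it suffices to verify, for the Lindenbaum--Tarski algebra $\klam{B_\omega,f}$ of each logic, that $B_\omega$ is atomless, that $f$ is $n$--transitive, and that $f^n[B_\omega]$ is dense in $B_\omega$. The first two points are quick. Recall that $B_\omega$ is the free algebra of the relevant variety on the countably many propositional variables $p_0, p_1, \ldots$. Given $[\phi] \neq 0$, pick a variable $p$ not occurring in $\phi$; since each of the logics is Kripke complete (both axioms are Sahlqvist, hence canonical), $\phi$ has a pointed model on a frame validating the logic, and reinterpreting $p$ as true everywhere (respectively nowhere) shows that $[\phi \cdot p]$ and $[\phi \cdot \c{p}]$ are nonzero and split $[\phi]$. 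Hence $B_\omega$ has no atoms. For $n$--transitivity, observe that the axiom \eqref{4n} built into $K4^n$ reads exactly $f^{n+1}(x) \leq f^n(x)$.

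The heart of the proof is density of $f^n[B_\omega]$, and here I would use the symmetry axiom in the form it provides for the $n$--th iterate. Writing $h \df f^n$ and $h^\partial = (f^\partial)^n$, axiom \eqref{Bn} is precisely the algebraic symmetry law $h(h^\partial(x)) \leq x$; together with $n$--transitivity, which gives $h \circ h = f^{2n} \leq f^n = h$, this makes $h$ a transitive, symmetric operator. Consequently, for every $y \in B_\omega^+$ the element $h(h^\partial(y)) = f^n((f^\partial)^n(y))$ lies in $f^n[B_\omega]$ and satisfies $h(h^\partial(y)) \leq y$, so the range of $h$ consists of $h$--saturated elements and the density claim amounts to showing these saturated elements are order--dense below each $y$. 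For the second family one argues in the same way, replacing \eqref{Bn} by \eqref{B'n} and using that $(f f^\partial)^n(x) \leq x$ exhibits a member of the image of $f$ below $x$. Once density is in hand, Theorem~\ref{thmNOclopc} applies verbatim and, via Corollary~\ref{cor:prod}, yields $\prod\set{f(y): 0 \lneq y \leq x} = 0$ for all $x \in B_\omega^+$, i.e.\ no proper companion, for every $n \geq 1$.

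The main obstacle is exactly the density step, and its subtlety is worth flagging. Since axiom $T$ is absent, $h = f^n$ is symmetric and transitive but \emph{not} a closure operator, so its canonical relation is only a partial equivalence relation and may possess isolated points; on such points $h$ is trivial, and an element supported there (for instance the nonzero $[\,\Box^n\bot\,] \in B_\omega$) carries no nonzero member of $f^n[B_\omega]$ below it, so the naive witness $f^n((f^\partial)^n(y))$ can vanish. The density argument must therefore dispose of this degenerate part. The natural way to do so is not to prove global density but to run the contradiction of Theorem~\ref{thmNOclopc} only against the witnesses $x,z$ furnished by a \emph{hypothetical} proper companion: by Theorem~\ref{thmNOpc} these already satisfy $z \leq f(y)$, hence $f(y) \neq 0$, for all $0 \lneq y \leq x$, so $x$ lies off the degenerate region and $f^n[B_\omega]$ is dense below it. Supplying this localized density --- showing that on the serial part the symmetric transitive operator $h$ behaves like an $S5$ closure and hence has order--dense image --- is where the real work lies.
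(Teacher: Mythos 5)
You have reduced the statement to the three hypotheses of Theorem~\ref{thmNOclopc}, and your verifications of atomlessness (fresh variable plus canonicity) and of $n$--transitivity are fine; more importantly, you have correctly located the weak point. The density of $f^n[B_\omega]$, which the paper's own one--line proof simply asserts, genuinely fails: your counterexample $[\Box^n\bot]$ is right, since $\Box^n\bot \vdash \neg\Diamond^n\chi$ for every $\chi$, so this nonzero element of the Lindenbaum--Tarski algebra has no nonzero member of $f^n[B_\omega]$ below it. Up to that point your analysis is sharper than the argument in the paper.

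The difficulty is that your proposed repair does not close the gap. The deferred lemma --- that $f(y)\neq 0$ for all $0\lneq y\leq x$ forces $f^n[B_\omega]$ to be dense below $x$, because on the serial part $h=f^n$ ``behaves like an $S5$ closure and hence has order--dense image'' --- is false. Take $n=1$ and the Lindenbaum--Tarski algebra of $K4B$. The element $x=[\Diamond\top]$ satisfies your precondition: if $\psi$ is consistent and $\psi\vdash\Diamond\top$, then by canonicity $\psi$ holds at a point with a successor of a symmetric transitive frame, such a point is reflexive, so $\Diamond\psi$ is consistent. Yet $y=[p\wedge\Diamond\neg p]$ is a nonzero element below $x$ carrying no nonzero member of $f[B_\omega]$: if $\vdash\Diamond\chi\to(p\wedge\Diamond\neg p)$, then the $K4B$--theorem $\Diamond\chi\to\Box\Diamond\chi$ gives $\vdash\Diamond\chi\to\Box p$, while also $\vdash\Diamond\chi\to\Diamond\neg p$; since $\Box p\wedge\Diamond\neg p$ is inconsistent, $[\Diamond\chi]=0$. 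So even an $S5$ possibility operator need not have order--dense image, and the route through Theorem~\ref{thmNOclopc} cannot be completed as you outline it. The statement itself survives, but by an argument orthogonal to density: if $x=[\xi]$, $z=[\zeta]$ witnessed \eqref{pc}, choose a variable $p$ occurring in neither formula; then $[\xi\wedge p]$ is a nonzero element below $x$, so $[\zeta]\leq[\Diamond(\xi\wedge p)]$, and the endomorphism of the free algebra $B_\omega$ sending $p\mapsto 0$ and fixing the remaining generators yields $[\zeta]\leq[\Diamond(\xi\wedge\bot)]=0$, a contradiction. That substitution argument uses only normality and freeness; as written, both your proof and the one you were meant to reconstruct have a hole exactly at the density step you flagged.
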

\begin{proof}
Since $B_\omega$ is the countable free Boolean algebra, it is atomless. Furthermore, \ref{4n} says that $f^n$ is $n$--transitive, and  \ref{Bn} and \ref{B'n} imply that $f^n[B_\omega]$ is dense in $B_\omega$.
\pfend\end{proof}

Therefore, the modal operator of the countable Lindenbaum--Tarski algebra of an axiomatic extension of K4B -- in particular, that of S5 --  does not have a proper companion. On the other hand, Corollary \ref{cor:atom} shows that any nontrivial modal operator on an algebra with at least one atom has a proper companion, in particular, that of a finite algebra with at least four elements. Therefore, we conclude that this property is not solely a property of the logic, but depends on the model.

\renewcommand\bibname{References}


\end{document}